\theoremstyle{plain}
\newtheorem{lemma}{Lemma}[section]
\newtheorem{proposition}[lemma]{Proposition}
\theoremstyle{definition}
\newtheorem{example}[lemma]{Example}
\newcommand{\Zset}{\mathbb Z}
\newcommand{\M}{\operatorname{\mathbb M}}
\newcommand{\gr}{\operatorname{gr}}
\newcommand{\ol}{\overline}
\newcommand{\End}{\operatorname{End}}
\newcommand{\END}{\operatorname{END}}
\newcommand{\Hom}{\operatorname{Hom}}
\newcommand{\HOM}{\operatorname{HOM}}
\newcommand{\so}{\mathbf{s}}
\newcommand{\ra}{\mathbf{r}}
\title[Cancellation properties of graded and nonunital rings]{Cancellation properties of graded and nonunital rings.\\ \smallskip Graded clean and graded exchange Leavitt path algebras}  
\author{Lia Va\v s}
\address{Department of Mathematics, Physics and Statistics, University of the Sciences, Philadelphia, PA 19104, USA}
\email{l.vas@usciences.edu}
\subjclass{16D70, 16U99, 16W50, 16S50, 16S88} 
\keywords{Nonunital ring, graded ring, cancellation properties, unit-regular ring, exchange ring, clean ring, graded matrix algebra, Leavitt path algebra}
\begin{document}
\begin{abstract} 
Various authors have been generalizing some unital ring properties to nonunital rings. We consider properties related to cancellation of modules (being unit-regular, having stable range one, being directly finite, exchange, or clean) and their ``local'' versions.  We explore their relationships and extend the defined concepts to graded rings. With graded clean and graded exchange rings suitably defined, we study how these properties behave under the formation of graded matrix rings. We exhibit properties of a graph $E$ which are equivalent to the unital Leavitt path algebra $L_K(E)$ being graded clean. We also exhibit some graph properties which are necessary and some which are sufficient for $L_K(E)$ to be graded exchange.
\end{abstract}

\maketitle

\section{Introduction}

The paper is motivated by two questions {\em ``If $P$ is a property of unital rings, how does one define a generalized version of $P$ suitable for nonunital rings?''} and  {\em ``If $P$ is a property of rings, how does one define the graded version $P_{\gr}$ of $P$ in a meaningful way?''} which we address for several different properties $P$. We say that a ring property is a {\em cancellation property} if it can be directly related to one of the cancellation properties of modules (internal cancellation, module-theoretic exchange, module-theoretic direct finiteness, and substitution) or if it is ``sandwiched'' between two properties directly relatable to module cancellation. For example, the properties that a ring is unit-regular, exchange, directly finite, or that it has stable range one can be directly related to internal cancellation, module-theoretic exchange, module-theoretic direct finiteness, and substitution respectively (see \cite{Lam_cancellation_properties} or \cite{Lia_cancellation} for more details). We consider cleanness to also be a cancellation property because clean rings bridge the classes of unit-regular and exchange rings.  

The cancellation properties mentioned above critically depend on the existence of the ring identity. Some of these properties have been adapted to nonunital rings: see \cite{Mary_Patricio} for unit-regularity, \cite{Vaserstein2} for stable range one, \cite{Ara_exchange} for exchange, and \cite{Nicholson_Zhou} for cleanness. In section \ref{section_nonunital}, we review the relevant definitions and adapt direct finiteness to nonunital rings. If Reg stands for von Neumann regular, and if UR stands for unit-regular, sr=1 for stable range one, DF for directly finite, Cln for clean, and Exch for exchange generalizations to possibly nonunital rings, the diagram below shows relations between these concepts. 
\[\label{D1}\tag{D1}
\begin{array}{lcll}
\text{UR} \Leftarrow & \text{Reg + sr=1} & \Rightarrow \text{sr=1} & \Rightarrow \text{ DF }\\
& \Downarrow & &\\
& \text{Cln} & \Rightarrow \text{Exch} &\\
\end{array}\]
We also consider the ``local'' versions of cancellation properties. If $P$ is a ring property, a ring $R$ has {\em local} $P$ if every finite subset of $R$ is contained in a corner of $R$ which has $P$. This definition is especially convenient for locally unital rings since finite subsets of such rings are contained in corners. We consider relations between $P$ and local $P$ when $P$ is a cancellation property. 

In section \ref{section_graded_rings}, we turn to rings which are graded. In \cite{Lia_cancellation}, the graded version of unit-regularity, stable range one, and direct finiteness were considered in the unital case. In this paper, we focus on the remaining two cancellation properties: being an exchange ring and being a clean ring. 
We introduce the graded versions of being clean in the unital case and being exchange in the general case and study properties of these generalized concepts, in particular how they behave under the formation of graded matrix rings or algebras (Propositions \ref{graded_clean_matrices} and \ref{matrices_graded_exchange}).  

In section \ref{section_LPAs}, we consider Leavitt path algebras. It turns out to be useful if one can pair a ring property $P$ with a graph property $P'$ so that the Leavitt path algebra $L_K(E)$ of a graph $E$ has $P$ if and only if $E$ has $P'.$ Such pairings enable one to create rings with various predetermined properties by choosing suitable graphs (for example prime and not primitive rings, simple and not purely infinite simple rings, and so on). A number of ring-theoretic properties have been paired up in such a way: being regular, simple, purely infinite simple, hereditary, semisimple, Artinian, Noetherian, directly finite, Baer, to name some of them. In particular, such characterizations are known for properties of being locally unit-regular, locally directly finite, and exchange. Proposition
\ref{local_P_implies_P} implies that the graph properties which characterize local unit-regularity and local direct finiteness also characterize the generalized versions of unit-regularity and direct finiteness. 

Leavitt path algebras are naturally $\Zset$-graded. Thus, pairing a {\em graded} ring property of $L_K(E)$ with a graph property of $E$ is also of interest. This has previously been done for graded local direct finiteness for any $E$ (\cite{Roozbeh_Ranga_Ashish}), and for graded unit-regularity and graded stable range one in the case when $E$ is finite (\cite{Lia_cancellation}). We relax this last assumption to the requirement that $E$ has finitely many vertices (no restriction on the number of edges) in Proposition \ref{graded_UR_characterization}. We present a necessary condition for a Leavitt path algebra to be  graded exchange (Proposition \ref{graded_exchange_necessary}) and a sufficient condition for the same property (Proposition \ref{graded_exchange_sufficient}) and we question if any of them is both necessary and sufficient. We also characterize which unital Leavitt path algebras are graded clean (Proposition \ref{LPA_graded_clean_characterization}). 
This latter result does not answer the long standing question {\em ``Which Leavitt path algebras are clean?''}, but it settles it in the graded case. 

The diagram below summarizes the relationship between the graded versions of the cancellation properties of Leavitt path algebras. 
If $P$ stands for any of UR, sr=1, DF, Cln, or Exch, then gr in the subscript indicates the graded version of these properties, and $0$ in the subscript denotes the condition that the $0$-component of the graded endomorphism ring of $L_K(E)$ has $P.$
The number of vertices of the graph is assumed to be finite for the vertical arrows except for those  related to Exch$_{\gr}$ and Reg$_{\gr}$ for which there are no restrictions on $E$.  

\begin{center}
\begin{tabular}{ccccccc}
\begin{tabular}{|c|} \hline 
Cln$_{\gr}$\\ \hline
\end{tabular}
& $\Rightarrow$ & 
\begin{tabular}{|c|} \hline 
UR$_{\gr},\;$ sr=1$_{\gr}$  \\ \hline
\end{tabular}
& $\Rightarrow$ & 
\begin{tabular}{|c|} \hline
DF$_{\gr}$ \\ \hline
\end{tabular} 
& $\Rightarrow$& 
\begin{tabular}{|c|} \hline 
Reg$_{\gr},\;$ $P_0$ holds. \\ \hline
\end{tabular}\\
$\Updownarrow$&&$\Updownarrow$&&$\Updownarrow$&&$\Updownarrow$\\
\begin{tabular}{|c|} \hline 
$E$ is finite $\;\bigsqcup\,\bullet$
\\or $E$ is $\xymatrix{\bullet\ar@(ru,rd) }\;\;\;\;$\\ \hline
\end{tabular}
& $\Rightarrow$ & 
\begin{tabular}{|c|} \hline 
$E$ is as in part (3)\\ of Proposition \ref{graded_UR_characterization}.  
\\ \hline
\end{tabular}
& $\Rightarrow$ & 
\begin{tabular}{|c|} \hline 
$E$ is\\ no-exit. 
\\ \hline
\end{tabular}
& $\Rightarrow$& 
\begin{tabular}{|c|} \hline 
$E$ is any. \\ \\ \hline
\end{tabular}\\
\end{tabular}
\end{center}
\[\label{D2}\tag{D2}\begin{array}{c}
\end{array}\]

\begin{center}\vskip-.5cm
\begin{tabular}{ccccc}
\begin{tabular}{|c|} \hline 
Cln$_{\gr}$\\ \hline
\end{tabular}
& $\Rightarrow$ & 
\begin{tabular}{|c|} \hline 
Exch$_{\gr}$   \\ \hline
\end{tabular}
& $\Rightarrow$& 
\begin{tabular}{|c|} \hline 
Reg$_{\gr}$  \\ \hline
\end{tabular}\\
$\Updownarrow$&&$\Downarrow$&&$\Updownarrow$\\
\begin{tabular}{|c|} \hline 
$E$ is finite $\;\bigsqcup\,\bullet$
\\or $E$ is $\xymatrix{\bullet\ar@(ru,rd) }\;\;\;\;$\\ \hline
\end{tabular}
& $\Rightarrow$ & 
\begin{tabular}{|c|} \hline 
$E$ is no-exit.\\   
\\ \hline
\end{tabular}
& $\Rightarrow$ & 
\begin{tabular}{|c|} \hline 
$E$ is any. \\ \\ \hline
\end{tabular}\\
\end{tabular}
\end{center}

Using the graph properties in the diagram, it is straightforward to produce examples showing that each horizontal implication is strict. This, in its own right, provides interesting examples illustrating differences between properties of rings and their graded generalizations. For example, Reg$_{\gr}\nRightarrow$ Exch$_{\gr},$ contrasts Reg $\Rightarrow$ Exch, and UR$_{\gr} \nRightarrow$ Cln$_{\gr},$ contrasts UR $\Rightarrow$ Cln.

\section{Cancellation properties and their nonunital generalizations}\label{section_nonunital}

Rings are assumed to be associative. If a ring has the identify, we call it a {\em unital ring}. We use the term a {\em general ring} to denote a possible lack of the identity. If $R$ is a unital ring, $U(R)$ denotes the set of elements which are invertible in $R$. 

\subsection{Unit-regular, stable range one, and directly finite rings}
A general ring is (von Neumann) {\em regular} if for every $x\in R,$ $x\in xRx.$ If $R$ is a unital ring,  $R$ is {\em unit-regular} if every $x\in R$ is unit-regular meaning that there is an invertible $u\in R$ such that $x=xux$; $R$ has {\em stable range one} if for every $x, y\in R$ such that $xR+yR=R$, there is $z\in R$ such that $(x+yz)R=R$ (equivalently, $x+yz$ is invertible, see \cite[Theorem 2.6]{Vaserstein2}); $R$ is {\em directly finite} if for every $x,y\in R,$ $xy=1$ implies $yx=1.$ If Reg, UR, sr=1, and DF denote these properties for short, the following relations hold and all implications are strict. For more details, see \cite{Lam_cancellation_properties} or \cite{Lia_cancellation}.
\begin{center}
Reg $\;\;\Leftarrow\;\;$ UR $\;\;\Rightarrow\;\;$ sr=1 $\;\;\Rightarrow\;\;$ DF 
\end{center}
All of the above properties are closed when passing to a corner, i.e. a ring of the form $eRe$ where $e\in R$ is an idempotent of a ring $R$ (\cite{Lam_cancellation_properties}, \cite{Vaserstein2} and \cite{Lia_cancellation} contain more details).  

\subsection{Exchange modules and rings}\label{subsection_exchange}
Recall that a right $R$-module $A$ has the {\em (finite) exchange property} if whenever (a copy of) $A$ is a direct summand of a right module $B=\bigoplus_{i\in I} B_i$ where $I$ is a (finite) index set, then there are submodules $A_i\leq B_i$ such that $\bigoplus_{i\in I} A_i$ is a complement of $A$ in $B.$ If $I$ is infinite, we denote this property by Exch($A$) and, if $I$ is finite, by FinExch($A$). 

An element $x$ of a unital ring $R$ is an {\em exchange element} of $R$ if there is an idempotent $e\in xR$ such that $1-e\in (1-x)R$ and $R$ is an {\em exchange ring} if every element is exchange. We use Exch$^r$ to shorten this last condition. 
The superscript $r$ indicates the presence of right modules. Exch$^l$ can be defined analogously. By \cite[Theorem 2]{Warfield_exchange} and \cite[Theorem 2.1]{Nicholson_clean}, the following conditions are equivalent for a right $R$-module $A$ (which we denote by writing $A_R$ for $A$).
\begin{enumerate}
\item[1.] FinExch($A_R$) holds. \hskip3.35cm 2. Exch$^r$ holds for $\End_R(A).$
\item[3.] Exch$^l$ holds for $\End_R(A).$ \hskip2.4cm 4. Exch($\End_R(A)_{\End_R(A)}$) holds.
\end{enumerate}
In the case when $A$ is $R_R$ and $R$ is unital, all of the above conditions are equivalent so one can suppress writing $l$ or $r$ in the superscript of Exch. 
By \cite[Lemma 3.10]{Crawley_Jonsson} and \cite[Proposition 1.10]{Nicholson_clean}, Exch is closed under the formation of matrix rings and corners of unital rings.   
By \cite[Proposition 1.1]{Nicholson_clean}, Exch is equivalent to the condition that for any $x\in R,$ there is an idempotent $e\in R$ such that $e-x\in (x-x^2)R.$ Since Exch is left-right symmetric, this condition is also left-right symmetric and we use Lift to denote it.

\subsection{Clean rings}\label{subsection_clean_rings}
An element $x$ of a unital ring $R$ is a {\em clean element} of $R$ if $x=u+e$ for some unit $u$ and some idempotent $e.$ In this case, the relation $x=u+e$ is a {\em clean decomposition} of $x.$ A ring is {\em clean} if every element is clean and we use Cln to denote this requirement. The property Cln is closed under the formation of matrix rings (see \cite[Corollary 1]{Han_Nicholson}), it is not closed under the formation of corners (see \cite{Ster_corners}), and it is not Morita invariant (see \cite{Ster}). 

The implications below hold by \cite{Camillo_Khurana} and \cite{Nicholson_clean} and we review the argument for the second one. 
\begin{center}
UR $\;\;\Rightarrow\;\;$ Cln $\;\;\Rightarrow\;\;$ Exch 
\end{center}
If $x=e+u$ is a clean decomposition of an element $x$ of a clean ring $R,$  then $f=u^{-1}(1-e)u$ is an idempotent such that 
$(x-x^2)u^{-1}=(e+u)u^{-1}-(eu^{-1}+ueu^{-1}+e+u)=f-e-u=f-x$
and so the property Lift holds for $R.$ Both implications are strict by examples from \cite{Camillo_Yu}.

\subsection{The standard unitization and operations \texorpdfstring{$\ast$}{TEXT} and \texorpdfstring{$\circ$}{TEXT}} 
If $R$ is a general ring, a unital ring $S$ such that $R$ embeds in $S$ as a double-sided ideal of $S$ is an {\em unitization} of $R.$ The {\em standard unitization} $R^u$ of $R$ is the ring $R\oplus \Zset$ with the addition given coordinate-wise and the multiplication given by
\[(x, k)(y, l)=(xy+lx+ky, kl)\]
so that $(0,1)$ is the identity of $R^u.$ If $R$ is a $K$-algebra for a field $K,$ $K$ can be used instead of $\Zset.$ 

The map $x\mapsto -x$ is an isomorphism of the monoids $(R, \ast)$ and $(R, \circ)$ where   
\[x\ast y=x+y+xy\hskip.5cm\mbox{ and }\hskip.5cm x\circ y=x+y-xy.\]
The ordered pairs in $U(R^u)$ are exactly the elements of the form $\pm(x,1)$ for $x\in U(R,\ast)$ (equivalently, the elements of the form $\pm(x,-1)$ for $x\in U(R,\circ)).$ 

\subsection{Nonunital unit-regular, stable range one, clean, and exchange rings} We continue to use UR, sr=1, Cln, Exch and DF for the following generalizations of the corresponding cancellation properties of unital rings. 

Operation $\ast$ was used in \cite{Mary_Patricio} to generalize UR: a general ring $R$ is {\em unit-regular} if for every $x\in R,$ there is $u\in U(\ast)$ such that $x=xux+x^2.$  The implication UR $\Rightarrow$ Reg continues to hold (see the proof of \cite[Theorem 4.1]{Mary_Patricio}).  By \cite[Example 4.3]{Mary_Patricio}, UR is not closed under the formation of corners. 

In \cite{Vaserstein2}, a general ring $R$ is said to have {\em stable range one} if for all $x\in R, \ol y\in R^u,$ such that $(x,1)R^u+\ol y R^u=R^u,$ there is $\ol z\in R^u$ such that $((x,1)+\ol y\, \ol z)R^u=R^u.$ By \cite[Theorem 3.6]{Vaserstein2}, this is equivalent with: for any $x,y\in R$ such that $(x, 1)R^u+(y, 0)R^u=R^u,$ there is $z\in R$ such that $(x+yz, 1)R^u=R^u.$ It is direct to check that this last condition is equivalent to the following: for any $x,y\in R$ such that $0\in x\ast R+yR,$ there is $z\in R$ such that $0\in (x+yz)\ast R.$ 

Next, we show that Reg + sr=1 $\Rightarrow$ UR. The converse does not hold by \cite[Example 4.3]{Mary_Patricio}.  
 
\begin{proposition}
For general rings, {\em Reg + sr=1} $\Rightarrow$ {\em UR}. 
\label{UR_vs_sr_one} 
\end{proposition}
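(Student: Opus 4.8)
The plan is to reduce the statement to the classical unital implication by localizing $x$ to a unital corner. The guiding observation is that, by the computation $(x,0)(u,1)(x,0)=(xux+x^2,0)$ in the standard unitization $R^u$ together with the description of $U(R^u)$ recalled above, an element $x\in R$ is unit-regular in the nonunital sense precisely when there is a unit of $R^u$ of the form $(w,1)$ with $(x,0)(w,1)(x,0)=(x,0)$. Thus it suffices to produce a $w\in U(\ast)$ with $xwx+x^2=x$, and I would obtain it by passing to a unital corner of $R$ containing $x$, applying the (classical, unital) implication $\mathrm{Reg}+\mathrm{sr=1}\Rightarrow\mathrm{UR}$ there, and translating the resulting corner-unit back into a $\ast$-unit of $R$.

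First I would use regularity to write $x=xax$ with $a=axa$, producing the idempotents $e=xa$ and $f=ax$ with $ex=x=xf$. A standard property of regular rings provides an idempotent $g\in R$ with $e,f\in gRg$; since $g$ is idempotent this forces $ge=eg=e$ and $gf=fg=f$, whence $gx=g(ex)=ex=x$ and $xg=(xf)g=xf=x$, so $x\in gRg$ and $gRg$ is a unital ring with identity $g$. The corner $gRg$ is again regular, and it has stable range one because stable range one is inherited by corners (\cite{Vaserstein2}); hence the unital implication (\cite{Lam_cancellation_properties}) shows $gRg$ is unit-regular, so $x=xux$ for some $u$ invertible in $gRg$, i.e. $uu^{-1}=u^{-1}u=g$. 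Finally I set $w=u-g$. Using $gu=ug=u$ and $gu^{-1}=u^{-1}g=u^{-1}$, a direct computation gives $(u-g,1)(u^{-1}-g,1)=(0,1)$ and likewise in the other order, so $(w,1)\in U(R^u)$ and therefore $w\in U(\ast)$. Using $xg=x$ one finds $(x,0)(w,1)=(xu,0)$, and then $(x,0)(w,1)(x,0)=(xux,0)=(x,0)$, that is $xwx+x^2=x$, which is exactly the nonunital unit-regularity of $x$.

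The main obstacle is the localization step rather than the algebra: I must secure the idempotent $g$ with $x\in gRg$ and, more delicately, confirm that stable range one descends from the possibly nonunital ring $R$ to the unital corner $gRg$, so that the classical unital theorem may be invoked there. The existence of $g$ is the familiar fact that a finite subset of a regular ring lies in a corner, and corner-closure of stable range one is part of Vaserstein's framework in \cite{Vaserstein2}; I would isolate and cite (or briefly verify) both. It is worth stressing that the failure of nonunital unit-regularity to pass to corners (\cite[Example 4.3]{Mary_Patricio}) poses no difficulty here, since the argument travels \emph{upward}, from the corner to $R$, through the explicit translation $u\mapsto w=u-g$; the extra summand $x^2$ in the nonunital definition is produced precisely by the relation $xg=x$.
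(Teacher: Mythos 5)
Your proof is correct, and while it shares the paper's overall skeleton (localize $x$ to a unital, unit-regular corner, then translate the corner-unit into a $\ast$-unit), it gets there by genuinely different means. The paper first embeds $R$ into a regular unitization $S$ via Funayama's theorem, invokes \cite[Lemma 1.1]{Menal_Moncasi} to place $x$ in a corner $eSe$ with $e=e^2\in R$, and then uses \cite[Lemma 1.4]{Menal_Moncasi} --- which characterizes sr=1 of an ideal $I$ of a regular unitization $T$ precisely by unit-regularity of all corners $eTe$ --- to conclude that this corner is unit-regular; the descent from ``$x$ is unit-regular in $S$'' back to the nonunital notion is then delegated to \cite[Corollary 4.5]{Mary_Patricio}. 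You instead stay inside $R$: you produce the corner $gRg$ directly from regularity, transfer sr=1 \emph{down} to it by corner-closure (\cite[Theorem 3.9]{Vaserstein2}, a fact the paper itself uses later for Leavitt path algebras), apply the classical unital implication Reg $+$ sr=1 $\Rightarrow$ UR there, and make the final translation explicit via $w=u-g$ --- which is the same element the paper writes as $1-e+u\in U(S)$, but verified by hand rather than by citation. Your route is more self-contained and avoids Funayama and Menal--Moncasi entirely; its price is that you must check two points you correctly flag: that a finite subset of a nonunital regular ring lies in a corner $gRg$ (this is exactly what the paper extracts from \cite[Lemma 1.1]{Menal_Moncasi}, since $eSe=eRe$ when $e\in R$ and $R$ is an ideal of $S$), and that Vaserstein's nonunital sr=1 restricted to the unital ring $gRg$ agrees with the classical unital notion, so that the unital theorem genuinely applies. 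Both are available in the cited sources, so the argument closes.
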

\begin{proof}
Let $R$ be a regular general ring with stable range one. By \cite[Theorem 2]{Funayama}, $R$ has a regular unitization $S.$ If we identify $R$ with its image in $S$, $R$ is contained in the directed union of the corners $eSe$ for $e=e^2\in R$ by \cite[Lemma 1.1]{Menal_Moncasi}. We also recall \cite[Lemma 1.4]{Menal_Moncasi} which states that a general ring $I$ with a regular unitization $T$ has sr=1 if and only if $eTe$ is unit-regular for every $e=e^2\in I$. By this result, any $x\in R$ is contained in a unit-regular corner $eSe$ for some $e=e^2\in R.$ If $u\in U(eSe)$ is such that $x=xux,$ then $1-e+u\in U(S)$ is such that $x=x(1-e+u)x.$ Thus, $x$ is unit-regular in $S.$ By Proposition \cite[Corollary 4.5]{Mary_Patricio}, $R$ is unit-regular.
\end{proof}

In \cite{Nicholson_Zhou}, a general ring $R$ is said to be {\em clean} if for any $x\in R,$ there is an idempotent $e\in R$ and $u\in U(\ast)$ such that $x=e+u.$ By \cite[Proposition 7]{Nicholson_Zhou}, this requirement is equivalent to suitable conditions expressed in terms of unitizations of $R.$ 
The property Cln does not transfer to corners even in the unital case by \cite{Ster_corners}. By \cite[Theorem 2]{H_Chen}, Reg+sr=1 $\Rightarrow$ Cln. We also note that cleanness is generalized to nonunital rings in \cite{Ilic-Gregorijevic_UJ} using an approach different than in \cite{Nicholson_Zhou}. 

In \cite{Ara_exchange}, a general ring $R$ is said to be an {\em exchange} ring if for any $x\in R,$ there is an idempotent $e\in xR$ such that  $e\in x\circ R.$ By \cite[Theorem 1.2]{Ara_exchange}, this requirement is left-right symmetric. The implication Cln $\;\Rightarrow\;$ Exch continues to hold by  \cite[Theorem 2]{Nicholson_Zhou}. Exch is closed under the formation of corners (by \cite[Proposition 1.3]{Ara_exchange}) and direct limits (direct to show). 

\subsection{Nonunital direct finiteness} Just as UR, the condition DF is given in terms of multiplication only, without any reference to addition. Thus, one can define a monoid $(M, \cdot, 1)$ to be directly finite if for every $x,y\in M,$ $xy=1$ implies $yx=1$ and 
define that  a general ring $R$ is {\em directly finite} if any of the conditions from Proposition \ref{directly_finite} holds.

\begin{proposition} If $R$ is a general ring, the following conditions are equivalent. 
\begin{enumerate}[\upshape(1)]
\item $(R,\ast)$ is directly finite. \hskip1cm {\em (2)} $(R, \circ)$ is directly finite.\hskip1cm {\em (3)} $R^u$ is directly finite.

\item[{\em (4)}]  For any embedding $\phi:R\to S$ such that $S$ is an unitization of $R,$ and for any $x,y\in R,$ $(\phi(x)+1)(\phi(y)+1)=1$ implies $(\phi(y)+1)(\phi(x)+1)=1.$
\end{enumerate}
If $R$ is unital, then the above conditions are equivalent with $R$ being directly finite. 
\label{directly_finite}
\end{proposition}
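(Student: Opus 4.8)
The plan is to pivot everything through condition (3): I would show (1) $\Leftrightarrow$ (3), (2) $\Leftrightarrow$ (3), and (4) $\Leftrightarrow$ (3). The equivalence (1) $\Leftrightarrow$ (2) is in fact free, since the excerpt records that $x\mapsto -x$ is a monoid isomorphism $(R,\ast)\to(R,\circ)$ and direct finiteness is invariant under monoid isomorphism, but I would also recover it as a by-product. The computations that drive the argument are the two products in $R^u$,
\[(a,1)(b,1)=(a\ast b,\,1)\qquad\text{and}\qquad (a,-1)(b,-1)=(ab-a-b,\,1),\]
together with the identity $ab-a-b=-(a\circ b)$, and, for an arbitrary unitization $\phi\colon R\hookrightarrow S$, the product
\[(\phi(x)+1_S)(\phi(y)+1_S)=\phi(x\ast y)+1_S;\]
all three follow at once from the multiplication rules and from $\phi$ being a ring homomorphism.

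For (1),(2) $\Leftrightarrow$ (3): since $R^u$ is unital with identity $(0,1)$, its direct finiteness is the usual condition. If $(a,k)(b,l)=(0,1)$, then the second coordinate forces $kl=1$, hence $k=l=1$ or $k=l=-1$. In the first case the first displayed product turns the equation into $a\ast b=0$ and the desired reverse equation into $b\ast a=0$, so this case is governed exactly by (1); in the second case, using $ab-a-b=-(a\circ b)$, the equation becomes $a\circ b=0$ and its reverse $b\circ a=0$, governed by (2). Restricting attention to a single sign yields (3) $\Rightarrow$ (1) and (3) $\Rightarrow$ (2); handling both signs yields (1) \& (2) $\Rightarrow$ (3). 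Combined with the free implication (1) $\Leftrightarrow$ (2), this gives (1) $\Leftrightarrow$ (2) $\Leftrightarrow$ (3).

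For (4) $\Leftrightarrow$ (3): I would obtain (4) $\Rightarrow$ (1) by specializing (4) to $S=R^u$ with the standard embedding $\phi(x)=(x,0)$, so that $\phi(x)+1=(x,1)$ and the first displayed product identifies (4) with the defining condition of (1). The substantive direction is (1) $\Rightarrow$ (4): given any unitization $\phi\colon R\hookrightarrow S$, if $(\phi(x)+1_S)(\phi(y)+1_S)=1_S$, then the third displayed identity gives $\phi(x\ast y)=0$, hence $x\ast y=0$ because $\phi$ is injective; now (1) supplies $y\ast x=0$, and running the identity backwards produces $(\phi(y)+1_S)(\phi(x)+1_S)=\phi(y\ast x)+1_S=1_S$. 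I expect this to be the main point: the force of the ``for every unitization'' quantifier collapses because the equation in $S$ is pulled back to the $\ast$-equation in $R$ using nothing about $S$ beyond injectivity of $\phi$.

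Finally, for the unital case, when $R$ has identity $1_R$ the map $(x,k)\mapsto(x+k\,1_R,\,k)$ is a unital ring isomorphism $R^u\xrightarrow{\;\sim\;}R\times\Zset$. As a finite direct product of unital rings is directly finite precisely when each factor is, and $\Zset$ is directly finite, $R^u$ is directly finite iff $R$ is; with (1) $\Leftrightarrow$ (3) this pins all four conditions to ordinary direct finiteness of $R$. Equivalently, one may substitute $S=R$, $\phi=\mathrm{id}$ into (4): as $x\mapsto x+1_R$ permutes $R$, condition (4) then reads verbatim ``$x'y'=1_R\Rightarrow y'x'=1_R$''. Aside from the sign bookkeeping in the $R^u$ products and the quantifier in (4), every step is a direct computation.
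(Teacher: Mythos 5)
Your proposal is correct and follows essentially the same route as the paper: the case split on the second coordinate $kl=1$ giving $k=l=\pm 1$, the identification of the two sign cases with the $\ast$- and $\circ$-conditions, and the pullback of the unitization equation to $x\ast y=0$ via injectivity of $\phi$ are exactly the paper's computations, merely reorganized to pivot through (3) rather than (1). Your extra observation that $R^u\cong R\times\Zset$ in the unital case is a valid alternative to the paper's direct substitution $xy=1\Leftrightarrow(x-1)\ast(y-1)=0$, but both amount to the same bookkeeping.
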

\begin{proof}
The equivalence of (1) and (2) holds since the monoids  $(R,\ast)$ and $(R, \circ)$ are isomorphic. 

To show (1)  $\Rightarrow$ (3), assume that $(x,k)(y,l)=(0,1)$ holds in $R^u.$ Then $k=l=1$ or $k=l=-1.$ In the first case, $x\ast y=x+y+xy=0$ which implies that $y\ast x=0$ by (1), and so $(y,1)(x,1)=(0,1)$. In the second case, $-(x\circ y)=-x-y+xy=0$ and so $x\circ y=x+y-xy=0$ which implies that $y\circ x=0$ by (2) and so $(y, -1)(x,-1)=(0,1)$. Thus, (3) holds. The converse (3) $\Rightarrow$ (1) is direct since $x\ast y=0$ readily implies $(x, 1)(y, 1)=(0,1).$  

To show (1) $\Rightarrow$ (4), assume that (1) holds and let $\phi$ and $S$ be as in (4). If $(\phi(x)+1)(\phi(y)+1)=1$ holds in $S,$ then $\phi(x+y+xy)=0$ so $x\ast y=0$ which implies that   $y\ast x=0$ by (1). Taking $\phi$ of both sides and adding 1 produces $(\phi(y)+1)(\phi(x)+1)=1.$ The converse (4) $\Rightarrow$ (1) is similar. 

If $R$ is unital, $xy=1$ iff $(x-1)\ast (y-1)=0$ and $x\ast y=0$ iff $(x+1)(y+1)=1$ for any $x,y\in R.$ This implies the last sentence of the proposition. 
\end{proof}

The property DF retains some favorable properties of the unital DF as we show next.  
 
\begin{proposition}
The property {\em DF} is closed under the formation of corners and $\,$ {\em sr=1} $\Rightarrow$ {\em DF}. 
\label{DF_and_corners}
\end{proposition}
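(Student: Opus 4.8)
The proposition has two essentially independent parts, and in both I would reduce to a computation inside the standard unitization $R^u$, using Proposition \ref{directly_finite} to move between the general and the unital notions of direct finiteness. For the statement about corners, the first thing I would note is that a corner $eRe$ is already a \emph{unital} ring, with identity $e$ (since $ex=x=xe$ for $x\in eRe$); hence by the last sentence of Proposition \ref{directly_finite} its general direct finiteness coincides with ordinary unital direct finiteness, and it suffices to take $x,y\in eRe$ with $xy=e$ and prove $yx=e$. The plan is the classical ``add the complementary idempotent'' device, carried out in $R^u$. Writing $\ol e=(e,0)$, I would set $\ol a=(x,0)+(1-\ol e)$ and $\ol b=(y,0)+(1-\ol e)$; because $x,y\in eRe$ one has $(x,0)(1-\ol e)=(1-\ol e)(y,0)=0$ and $(1-\ol e)^2=1-\ol e$, so a one-line computation gives $\ol a\,\ol b=\ol e+(1-\ol e)=1$. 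Since $R$ is directly finite, $R^u$ is directly finite by Proposition \ref{directly_finite}(3), whence $\ol b\,\ol a=1$; comparing the $R$-components of this identity yields $yx=e$.

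For the implication sr=1 $\Rightarrow$ DF, by Proposition \ref{directly_finite}(1) it is enough to show that $(R,\ast)$ is directly finite, i.e.\ that $p\ast q=0$ implies $q\ast p=0$. I would encode $p\ast q=0$ as $\ol a\,\ol b=1$ in $R^u$ with $\ol a=(p,1)$ and $\ol b=(q,1)$, and then adapt the standard unital argument that stable range one upgrades a one-sided inverse to a two-sided one. Here $1-\ol b\,\ol a$ conveniently lies in $R$: one computes $1-\ol b\,\ol a=(w,0)$ with $w=-(q\ast p)$, and the row $(\ol b,(w,0))$ is unimodular because $\ol b\,\ol a+(w,0)=1$, that is $(q,1)R^u+(w,0)R^u=R^u$.

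The step I expect to be the main obstacle is that Vaserstein's nonunital stable range one condition only reduces rows whose first coordinate has the special form $(\,\cdot\,,1)$; so one must check that the row produced above is of exactly this shape. It is, since its first entry is $\ol b=(q,1)$, and the condition then furnishes $z\in R$ with $\ol c:=(q+wz,1)=\ol b+(1-\ol b\,\ol a)(z,0)$ right invertible in $R^u$. Using $\ol a\,\ol b=1$ (hence $\ol a\,\ol b\,\ol a=\ol a$) I would verify $\ol a\,\ol c=1$, so that $\ol c$ has a left inverse $\ol a$ and a right inverse and is therefore a unit with $\ol c\,\ol a=1$. Finally $\ol b=(\ol c\,\ol a)\ol b=\ol c(\ol a\,\ol b)=\ol c$, giving $\ol b\,\ol a=\ol c\,\ol a=1$, i.e.\ $q\ast p=0$, as desired. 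Note that the last few lines use only the elementary fact that an element with a left and a right inverse is a unit, so no circular appeal to direct finiteness of $R^u$ is made.
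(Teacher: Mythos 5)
Your proposal is correct and follows essentially the same route as the paper: for corners, your elements $(x,0)+(1-\ol e)$ and $(y,0)+(1-\ol e)$ are literally $(x-e,1)$ and $(y-e,1)$, which is exactly the computation in the paper; for sr=1 $\Rightarrow$ DF, you reduce the same unimodular row $\bigl((q,1),\,1-(q,1)(p,1)\bigr)$ in $R^u$ and conclude by the same left-inverse/right-inverse argument. The only cosmetic differences are your explicit remark that the corner is unital and your use of the $(y,0)$-form of Vaserstein's condition to take $z\in R$ rather than $\ol z\in R^u$.
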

\begin{proof}
To show the claim on corners, let $R$ be a general ring, $e$ an idempotent of $R$, and $x,y\in eRe$ be such that $xy=e.$ Then $(x-e, 1)(y-e, 1)=(xy-x-y+e+x-e+y-e, 1)=(xy-e, 1)=(0,1)$ and so $(y-e, 1)(x-e, 1)=(0,1)$ which implies that $yx=e.$

To show sr=1 $\Rightarrow$ DF, assume that a general ring $R$ has sr=1 and let $x,y\in R$ be such that $x\ast y=0.$ Then $(x,1)(y,1)=(0,1)$ in $R^u$ and so $\bar e=(0,1)-(y,1)(x,1)=(-x-y-yx,0)$ is an idempotent in $R^u$ such that $(0,1)=\bar e+(y,1)(x,1)\in \bar eR^u+(y,1)R^u.$ As sr=1 holds, there is $\bar z\in R^u$ such that $((y,1)+\bar e \bar z)R^u=R^u.$ Since $(x,1)\bar e=(x,1)-(x,1)(y,1)(x,1)=(x,1)-(x,1)=(0,0),$ $(x,1)((y,1)+\bar e\bar z)=(x,1)(y,1)=(0,1).$ So, $(y,1)+\bar e\bar z$ is left and right invertible and, thus, its left inverse $(x,1)$ is invertible. So, the right inverse $(y,1)$ of $(x,1)$ is the double-sided inverse of $(x,1).$ The relation $(y,1)(x,1)=(0,1)$ implies that $y\ast x=0.$
\end{proof}

By Propositions \ref{UR_vs_sr_one} and \ref{DF_and_corners} and by results we reviewed above, all implications of diagram (\ref{D1}) hold. By \cite[Example 4.3]{Mary_Patricio}, the relation $\Leftarrow$ is strict. Examples showing that three horizontal $\Rightarrow$  are strict are known to exist among unital rings. Example of a clean ring with a non-clean corner from \cite{Ster_corners} provides an example of a clean ring which does not have Reg+sr=1. 

\subsection{Local cancellation properties}\label{subsection_local}

Recall that a general ring $R$ is said to be a {\em locally unital} ring (and to have {\em local units}) if for every finite set $F,$ there is an idempotent $e$ such that $F\subseteq eRe.$    

If $P$ a property of (unital or general) rings, we say that a general ring $R$ has {\em local $P$} if every finite set of elements of $R$ is contained in a corner of $R$ which has $P$.   
This ``local'' approach has been used for generalizing unital UR in  \cite{Gene_Ranga_regular} and unital DF in  \cite{Lia_traces}. In \cite{Roozbeh_Lia_Baer}, the properties of being Baer and Rickart were generalized using an approach equivalent to this one (since being Baer and being Rickart transfer to corners).  

If a general ring $R$ has local $P,$ then it has local units. If $P$ transfers to corners and $R$ is a locally unital ring which has $P,$ then $R$ has local $P$ (since any finite set is contained in a corner and every corner has $P$). We show the converse of $(P$ $\Rightarrow$ local $P)$ for all five cancellation properties. 

\begin{proposition}
If $P$ is any of {\em UR, sr=1, DF, Cln,} or {\em Exch}, then (local $P$ $\Rightarrow$ $P)$ holds.  
\label{local_P_implies_P}
\end{proposition}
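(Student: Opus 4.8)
The plan is to treat all five properties with a single template. Since $R$ has local $P$, any prescribed finite subset of $R$ lies in some corner $C=eRe$ which is a \emph{unital} ring with identity $e$ enjoying the unital version of $P$. The whole game is to translate each nonunital condition for $R$ into a unital statement inside such a $C$, apply $P$ there, and carry the witness back to $R$. Two elementary identities drive every translation. First, for $a,b\in C$ one has $ea=ae=a$, so a short computation gives $(a-e)\ast(b-e)=ab-e$; hence $ab=e$ if and only if $(a-e)\ast(b-e)=0$ (this is the corner version of the identity used in the proof of Proposition \ref{directly_finite}). Second, if $u$ is a unit of $C$ with inverse $u^{-1}\in C$, then $(u-e)\ast(u^{-1}-e)=0=(u^{-1}-e)\ast(u-e)$, so $u-e\in U(\ast)$. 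Finally, every idempotent of $C$ is an idempotent of $R$ lying in $eRe$. For the element-wise properties we only feed the single relevant element to $C$; the one design choice is whether to use $x$ itself or its shift $x+e$.

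For UR I would take a unit-regular corner $C\ni x$, write $x=xux$ with $u\in U(C)$, and set $v=u-e\in U(\ast)$; since $xex=x^2$ this gives $xvx+x^2=x$, exactly nonunital unit-regularity. For Cln the trick is to clean-decompose the \emph{shifted} element: if $x+e=g+u$ in $C$ with $g$ idempotent and $u\in U(C)$, then $x=g+(u-e)$ with $g=g^2\in R$ and $u-e\in U(\ast)$, a nonunital clean decomposition. For DF I would place the pair $\{x,y\}$ satisfying $x\ast y=0$ into a directly finite corner $C$; applying the first identity to $a=x+e$ and $b=y+e$ turns $x\ast y=0$ into $ab=e$, direct finiteness of $C$ yields $ba=e$, and translating back gives $y\ast x=0$. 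For Exch I would take $x$ into an exchange corner $C$, obtaining an idempotent $f=xc\in xC$ with $e-f=(e-x)d\in(e-x)C$; then $f\in xR$ automatically, and since $f-x=(e-x)(e-d)$ one checks $x\circ(e-d)=f$, so $f\in x\circ R$ is the required exchange idempotent.

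The delicate case, and the main obstacle, is sr=1, because its hypothesis $0\in x\ast R+yR$ comes with witnesses $r,w\in R$ satisfying $x\ast r+yw=0$, and these need not lie in a corner chosen to contain only $x,y$. The remedy is to apply local sr=1 to the enlarged finite set $\{x,y,r,w\}$, producing a corner $C=eRe$ that contains all four and has sr=1. Inside $C$ the identity $(x+e)(r+e)=(x\ast r)+e$ rewrites $x\ast r+yw=0$ as $(x+e)(r+e)+yw=e$, so $(x+e)C+yC=C$; sr=1 of $C$ then yields $z\in C$ with $x+e+yz\in U(C)$, whence $x+yz=(x+e+yz)-e\in U(\ast)$ and $0\in(x+yz)\ast R$, as required. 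Thus the genuine subtlety throughout is the bookkeeping of \emph{which} finite set must be handed to the local hypothesis — the element (or pair) alone for UR, Cln, DF, and Exch, but the element together with its coprimeness witnesses for sr=1 — after which the two translation identities make each verification routine.
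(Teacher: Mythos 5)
Your proof is correct and follows essentially the same route as the paper's: for each property you pass to a corner $eRe$ containing the relevant elements, apply the unital property there, and translate the witness back via the identities $(a-e)\ast(b-e)=ab-e$ and $u\in U(eRe)\Rightarrow u-e\in U(\ast)$; your treatments of UR, Cln (decomposing the shifted element $x+e$), DF, and Exch match the paper's almost verbatim. The one place you genuinely diverge is sr=1: you enlarge the finite set to $\{x,y,r,w\}$ so that the coprimality witnesses land inside the corner, whereas the paper keeps the corner containing only $x,y$ and observes that if $x\ast u+yv=0$ then $x\ast(ue)+y(ve)=0$ with $ue,ve\in eRe$, i.e.\ the witnesses can be corrected rather than captured. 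Both devices work; yours avoids that small computation at the cost of invoking the local hypothesis on a larger set, while the paper's shows that a corner adapted to $\{x,y\}$ alone already suffices.
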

\begin{proof}
For UR, let $x\in R$ and let $e\in R$ be idempotent such that $x\in eRe$ and $eRe$ is unit-regular. Let $u\in U(eRe)$ be such that $x=xux.$ Then, $u-e\in U(\ast)$ and $x(u-e)x+x^2=xux-x^2+x^2=x.$ Hence, $x$ is a unit-regular element of $R.$ 

For sr=1, let $x,y\in R$ be such that $0\in x\ast R+yR$ and let $e=e^2$ be such that $x,y\in eRe$ and that $eRe$ has stable range one. If $0=x\ast u+yv=x+u+xu+yv$ for some $u,v\in R,$ then $x+ue+xue+yve=0.$ As $x, y, xue, yve\in eRe,$ $ue$ is in $eRe$ and $yve=yeve.$ Thus, $0=x\ast ue +yve\in x\ast eRe+yeRe.$ By the assumption that sr=1 holds for $eRe,$ there is $z\in eRe\subseteq R$ such that $0\in (x+yz)\ast eRe\subseteq (x+yz)\ast R.$

For DF, let $x,y\in R$ and let $e=e^2\in R$ be such that $x,y\in eRe$ and $eRe$ is DF. If $x\ast y=0,$ then $(x+e)(y+e)=e$ and so  $(y+e)(x+e)=e$ which implies $y\ast x=0$.  

For Cln, let $x\in R$ and $e=e^2\in R$ be such that $x\in eRe$ and $eRe$ is clean. As $e+x\in eRe,$ let $e+x=f+u$ for some $f=f^2\in eRe$ and $u\in U(eRe).$ Then $x=f+(u-e)$ and $u-e\in U(\ast).$ 

For Exch, if $x\in R$ and $e\in R$ is such that $x\in eRe$ and $eRe$ is exchange, then there is $f=f^2\in eRe$ such that $f\in xeRe\subseteq xR$ and $f\in x\circ eRe\subseteq x\circ R.$ Thus, $x$ is exchange in $R.$ 
\end{proof}

\section{Cancellation properties of graded rings}\label{section_graded_rings}

Unless stated otherwise, $\Gamma$ denotes an arbitrary group and $\varepsilon$ denotes its identity element. 

\subsection{Graded rings prerequisites}\label{subsection_graded_rings_prerequisites}
A general ring $R$ is {\em graded} by a group $\Gamma$ if $R=\bigoplus_{\gamma\in\Gamma} R_\gamma$ for additive subgroups $R_\gamma$ and $R_\gamma R_\delta\subseteq R_{\gamma\delta}$ for all $\gamma,\delta\in\Gamma.$ The elements of the set $H=\bigcup_{\gamma\in\Gamma} R_\gamma$ are said to be {\em homogeneous}. The grading is {\em trivial} if $R_\gamma=0$ for every $\varepsilon\neq\gamma\in \Gamma.$ We adopt the standard definitions of graded ring homomorphisms, graded left and right $R$-modules, graded module homomorphisms, graded algebras, graded left and right ideals, graded left and right free and projective modules as defined in \cite{NvO_book} and \cite{Roozbeh_book}. A $\Gamma$-graded unital ring $R$ is a {\em graded division ring} if every $0\neq x\in H$ is invertible.  In this case, $R$ is a {\em graded field} if $R$ is also commutative.

If $M$ is a graded right $R$-module and $\gamma\in\Gamma,$ the $\gamma$-\emph{shifted or $\gamma$-suspended} graded right $R$-module $(\gamma)M$ is defined as the module $M$ with the $\Gamma$-grading given by $(\gamma)M_\delta = M_{\gamma\delta}$ for any $\delta\in \Gamma.$ 
If $M$ and $N$ are graded right $R$-modules and $\gamma\in\Gamma$, then $\Hom_R(M,N)_\gamma$ denotes the following 
\[\Hom_R(M,N)_\gamma=\{f\in \Hom_R(M, N)\mid f(M_\delta)\subseteq N_{\gamma\delta}\mbox{ for any }\delta\in\Gamma\},\]
then the subgroups $ \Hom_R(M,N)_\gamma$ of $\Hom_R(M,N)$ intersect trivially and $\HOM_R(M,N)$ denotes their direct sum $\bigoplus_{\gamma\in\Gamma} \Hom_R(M,N)_\gamma.$ The notation $\END_R(M)$ is used in the case if $M=N.$ If $M$ is finitely generated (which is the case we often consider), then $\Hom_R(M,N)=\HOM_R(M,N)$ for any $N$ (both \cite{NvO_book} and \cite{Roozbeh_book} contain details) and $\End_R(M)=\END_R(M)$ is a $\Gamma$-graded unital ring.

In \cite{Roozbeh_book}, for a $\Gamma$-graded unital ring $R$ and $\gamma_1,\dots,\gamma_n\in \Gamma$, $\M_n(R)(\gamma_1,\dots,\gamma_n)$ denotes the ring of matrices $\M_n(R)$ with the $\Gamma$-grading given by  
$(r_{ij})\in\M_n(R)(\gamma_1,\dots,\gamma_n)_\delta$ if $r_{ij}\in R_{\gamma_i^{-1}\delta\gamma_j}$ for $i,j=1,\ldots, n.$ 
In \cite{NvO_book}, $\M_n(R)(\gamma_1,\dots,\gamma_n)$ is defined so that it is $\M_n(R)(\gamma_1^{-1},\dots,\gamma_n^{-1})$ using the definition from \cite{Roozbeh_book} (more details on the relations between two definitions can be found in \cite[Section 1]{Lia_realization}). We opt to use the definition from \cite{Roozbeh_book}. With this definition, if $F$ is the graded free right module $(\gamma_1^{-1})R\oplus \dots \oplus (\gamma_n^{-1})R$ (and any finitely generated graded free right $R$-module is of that form, see  \cite{NvO_book} or \cite{Roozbeh_book}), then $\Hom_R(F,F)\cong_{\gr} \;\M_n(R)(\gamma_1,\dots,\gamma_n)$ as graded unital rings.      

We also recall \cite[Theorem 1.3.3]{Roozbeh_book}  stating the following lemma for $\Gamma$ abelian only, but the proof generalizes to arbitrary $\Gamma.$ \cite[Remark 2.10.6]{NvO_book} also states the first two parts for arbitrary $\Gamma.$

\begin{lemma} \cite[Theorem 1.3.3]{Roozbeh_book}. Let $R$ be a $\Gamma$-graded unital ring and $\gamma_1,\ldots,\gamma_n\in \Gamma.$ 
\begin{enumerate}[\upshape(1)]
\item If $\pi$ a permutation of the set $\{1,\ldots, n\},$ then 
\begin{center}
$\M_n (R)(\gamma_1, \gamma_2,\ldots, \gamma_n)\;\cong_{\gr}\;\M_n (R)(\gamma_{\pi(1)}, \gamma_{\pi(2)} \ldots, \gamma_{\pi(n)}).$
\end{center}
\item If $\delta$ in the center of $\Gamma,$ $\;\M_n (R)(\gamma_1, \gamma_2, \ldots, \gamma_n)\;=\;\M_n (R)(\gamma_1\delta, \gamma_2\delta,\ldots, \gamma_n\delta).$
\item If $\delta\in\Gamma$ is such that there is an invertible element $u_\delta$ in $R_\delta,$ then  
\begin{center}
$\M_n (R)(\gamma_1, \gamma_2, \ldots, \gamma_n)\;\cong_{\gr}\;\M_n (R)(\gamma_1\delta, \gamma_2\ldots, \gamma_n).$
\end{center}
\end{enumerate}
\label{lemma_on_shifts} 
\end{lemma}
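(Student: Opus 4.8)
The plan is to prove each of the three parts by exhibiting an explicit graded ring isomorphism (or equality) between the two matrix algebras and verifying that it respects both the ring structure and the $\Gamma$-grading. All three statements concern how the shift tuple $(\gamma_1,\dots,\gamma_n)$ can be altered without changing the graded isomorphism type of $\M_n(R)(\gamma_1,\dots,\gamma_n)$, so the unifying strategy is to track the degree condition $r_{ij}\in R_{\gamma_i^{-1}\delta\gamma_j}$ for a homogeneous matrix of degree $\delta$ and see what transformations preserve it.

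For part (1), I would use the fact that $\M_n(R)(\gamma_1,\dots,\gamma_n)\cong_{\gr}\Hom_R(F,F)$ where $F=(\gamma_1^{-1})R\oplus\dots\oplus(\gamma_n^{-1})R$, as recorded in the excerpt. Permuting the summands of $F$ by $\pi$ gives a graded isomorphic free module $F'=(\gamma_{\pi(1)}^{-1})R\oplus\dots\oplus(\gamma_{\pi(n)}^{-1})R$, and any graded isomorphism $F\cong_{\gr}F'$ induces a graded ring isomorphism on the respective endomorphism rings; translated back through the matrix identification this is exactly the claimed isomorphism. Concretely, conjugation by the permutation matrix $P_\pi$ sends $(r_{ij})$ to $(r_{\pi^{-1}(i)\pi^{-1}(j)})$, and one checks the degree condition transforms correctly. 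For part (2), when $\delta$ is central one computes $\gamma_i^{-1}\sigma\gamma_j$ versus $(\gamma_i\delta)^{-1}\sigma(\gamma_j\delta)=\delta^{-1}\gamma_i^{-1}\sigma\gamma_j\delta$; centrality of $\delta$ lets $\delta^{-1}$ and $\delta$ cancel around $\gamma_i^{-1}\sigma\gamma_j$, so the two degree conditions coincide for every entry and every $\sigma$. Hence the two gradings on $\M_n(R)$ are literally identical, which is why the statement asserts equality rather than merely isomorphism.

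For part (3), the situation is genuinely different because only one coordinate $\gamma_1$ is shifted to $\gamma_1\delta$, so centrality is not available to rescue the computation. Here I would exploit the hypothesis that $R_\delta$ contains an invertible element $u_\delta$. The idea is that multiplying the first row by $u_\delta^{-1}$ and the first column by $u_\delta$ (equivalently, conjugating by the diagonal matrix $D=\operatorname{diag}(u_\delta,1,\dots,1)$) implements the shift: if $(r_{ij})$ satisfies the degree condition for the tuple $(\gamma_1\delta,\gamma_2,\dots,\gamma_n)$, then the conjugated matrix satisfies the condition for $(\gamma_1,\gamma_2,\dots,\gamma_n)$. One verifies that $r\mapsto D^{-1}rD$ is a ring automorphism of $\M_n(R)$ carrying one grading to the other, using that $u_\delta$ is homogeneous of degree $\delta$ so that left and right multiplication shift degrees by exactly $\delta^{-1}$ and $\delta$ in the affected positions.

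The main obstacle I anticipate is the bookkeeping in part (3): one must check the degree shift for all four blocks of the matrix (the $(1,1)$ entry, the rest of the first row, the rest of the first column, and the lower-right $(n-1)\times(n-1)$ block) and confirm that conjugation by $D$ lands each entry in the correct homogeneous component for the target grading. The off-diagonal first row and column are where the single factor of $\delta$ enters asymmetrically, so these are the cases to treat with care; the diagonal $(1,1)$ entry sees $u_\delta^{-1}(\cdot)u_\delta$ and must come out degree-preserving, while the lower-right block is untouched. Once the degree shifts are verified blockwise, the fact that conjugation by an invertible matrix is a ring isomorphism makes the algebraic part routine, so the entire difficulty is localized in correctly matching the grading conditions.
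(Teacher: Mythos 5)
Your argument is correct and is essentially the standard one; note that the paper itself offers no proof of this lemma, citing it from \cite[Theorem 1.3.3]{Roozbeh_book} with the remark that the proof there (given for abelian $\Gamma$) generalizes to arbitrary $\Gamma$, and your conjugation-by-permutation-matrix, direct degree computation, and conjugation-by-$\operatorname{diag}(u_\delta,1,\dots,1)$ arguments are exactly that proof. One small slip in part (3): with the grading convention $r_{ij}\in R_{\gamma_i^{-1}\sigma\gamma_j}$ and $u_\delta^{-1}\in R_{\delta^{-1}}$, the map $r\mapsto D^{-1}rD$ carries the $(\gamma_1,\dots,\gamma_n)$-grading to the $(\gamma_1\delta,\gamma_2,\dots,\gamma_n)$-grading, not the reverse as you state (e.g.\ the $(1,j)$ entry $u_\delta^{-1}r_{1j}$ would land in $R_{\delta^{-1}\delta^{-1}\gamma_1^{-1}\sigma\gamma_j}$ under your stated direction); since the inverse map $r\mapsto DrD^{-1}$ realizes the other direction, this is harmless, but the blockwise verification should be run with the correct orientation.
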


We recall some examples we often consider in the rest of the paper. Let $K$ be a field and consider it to be trivially graded by $\Zset.$ By Lemma \ref{lemma_on_shifts}, every graded matrix algebra over $K$ is graded isomorphic to $\M_n(K)(0, l_1,\ldots,l_{n-1})$ where $l_i$ are integers such that $0\leq l_i\leq l_{i+1}$ for $i=1,\ldots, n-2.$

If $m$ is a positive integer, let $K[x^m, x^{-m}]$ denote the ring of Laurent polynomials over $x^m.$ We consider this ring $\Zset$-graded by $K[x^m, x^{-m}]_{km}=Kx^{km}$ and $K[x^m, x^{-m}]_{km+l}=0$ for any integer $k$ and $l\in \{1,\ldots, m-1\}.$ By Lemma \ref{lemma_on_shifts}, 
every graded matrix ring over $K[x^m, x^{-m}]$ is graded isomorphic to $\M_n(K[x^m, x^{-m}])(0, l_1,\ldots,l_{n-1})$ where $l_i\in \{0, 1, \ldots, m-1\}$ and $l_i\leq l_{i+1}$ for $i=1,\ldots, n-2.$

\subsection{Cancellation properties of graded rings}\label{subsection_properties_of_graded_rings} 
If $P$ is a (general or unital) ring property, the term {\em graded property $P$} has been used for the property  
$P_{\gr}$ obtained by replacing every $\forall x$ and $\exists x$ appearing in $P$ by the restricted versions $\forall x\in H$ and $\exists x\in H$ where $H$ is the set of homogeneous elements. For example, since the property $(\forall x)(\exists y)(xyx=x)$ defines a regular ring, a graded ring $R$ is {\em graded regular} if $(\forall x\in H)(\exists y\in H)(xyx=x)$ and we denote this condition by Reg$_{\gr}.$

Similarly, if $R$ is a graded unital ring, then $R$ is {\em graded unit-regular} if for every $x\in H,$ there is $u\in U(R)\cap H$ such that $xux=x;$ 
$R$ has {\em graded stable range one} if the condition $(\gamma^{-1})xR+(\delta^{-1})yR=R$ for some $x\in R_\gamma, y\in R_\delta,$ implies that there is $z\in H$ such that $(\gamma^{-1})(x+yz)R=R;$ $R$ is {\em graded directly finite} if $xy=1$ implies $yx=1$ for all $x,y\in H.$ 
By \cite{Lia_cancellation}, $\,$ UR$_{\gr}$ $\;\Rightarrow\;$ sr=1$_{\gr}$ $\;\Rightarrow\;$  DF$_{\gr}.$

The graded versions of Cln and Exch can be defined similarly. In particular, a  graded unital ring $R$ is {\em graded clean} if every homogeneous $x\in R$ is a sum of a homogeneous unit and a homogeneous idempotent and we call such sum a {\em graded clean decomposition} of $x$. If $R$ is graded clean, we say that Cln$_{\gr}$ holds for $R.$  Such definition was also considered in \cite[Definition 3.7]{Ilic-Georgijevic_Sahinkaya}.

We say that $R$ is {\em graded right exchange} and that Exch$_{\gr}^r$ holds for $R$ if for every homogeneous $x\in R,$ there is a homogeneous idempotent $e\in R$ such that $e\in xR$ and $1-e\in (1-x)R.$ The condition  Exch$_{\gr}^l$ is defined analogously. The ring $R$ is graded exchange if it is both graded left and graded right exchange and we say that Exch$_{\gr}$ holds in this case. 

If the grading is trivial and $P$ is any cancellation property, the condition $P_{\gr}$ reduces to $P.$ We examine the graded versions of Cln and Exch in more details next. 

\subsection{Graded cleanness} The condition Cln$_{\gr}$ is very restrictive as the following lemma shows.
The direction $\Rightarrow$ of the lemma was also noted in \cite[Remark 3.8]{Ilic-Georgijevic_Sahinkaya}.

\begin{lemma} Let $R$ be a $\Gamma$-graded unital ring. Then $R$ is graded clean if and only if $R_\varepsilon$ is clean and each nonzero element of $R_\gamma$ is invertible for every $\gamma\neq \varepsilon.$ 
\label{graded_clean_lemma}
\end{lemma}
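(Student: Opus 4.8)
The plan is to prove both implications by exploiting three standard features of homogeneous elements in a $\Gamma$-graded unital ring: the identity lies in $R_\varepsilon$; a nonzero homogeneous idempotent necessarily has degree $\varepsilon$ (since $e\in R_\gamma$ forces $e=e^2\in R_{\gamma^2}$, and the directness of the grading gives $R_\gamma\cap R_{\gamma^2}=0$ unless $\gamma=\gamma^2=\varepsilon$); and the inverse of a homogeneous unit of degree $\gamma$ is itself homogeneous of degree $\gamma^{-1}$ (matching the $\varepsilon$-component of $uu^{-1}=1$ produces a one-sided homogeneous inverse, which must coincide with the genuine inverse). These three facts are the engine of the whole argument and I would cite them from the graded-ring preliminaries.

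For the direction ($\Leftarrow$), assume $R_\varepsilon$ is clean and every nonzero element of $R_\gamma$ is invertible for $\gamma\neq\varepsilon$. I would take an arbitrary homogeneous $x$ and split into cases by its degree. If $x\in R_\varepsilon$, cleanness of $R_\varepsilon$ gives $x=u+e$ with $u\in U(R_\varepsilon)$ and $e=e^2\in R_\varepsilon$; since $u^{-1}\in R_\varepsilon\subseteq H$ and $e\in H$, this is already a graded clean decomposition. If $x\in R_\gamma$ with $\gamma\neq\varepsilon$ and $x\neq 0$, the hypothesis makes $x$ invertible, and its inverse is homogeneous by the third fact, so $x$ is a homogeneous unit and $x=x+0$ is a graded clean decomposition (unit plus the zero idempotent). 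The remaining case $x=0$ is handled by $0=(-1)+1$.

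For ($\Rightarrow$), assume $R$ is graded clean and write any homogeneous $x$ as $x=u+e$ with $u$ a homogeneous unit and $e$ a homogeneous idempotent. To see that $R_\varepsilon$ is clean, take $x\in R_\varepsilon$: if $e=0$ then $x=u$ is a homogeneous unit of degree $\varepsilon$, hence lies in $U(R_\varepsilon)$; if $e\neq 0$ then $e\in R_\varepsilon$ by the idempotent fact, so $u=x-e\in R_\varepsilon$ is a homogeneous unit of degree $\varepsilon$ with inverse in $R_\varepsilon$. Either way $x=u+e$ is a clean decomposition inside $R_\varepsilon$. To obtain the second condition, fix $0\neq x\in R_\gamma$ with $\gamma\neq\varepsilon$ and again write $x=u+e$. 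If $e\neq 0$ then $e\in R_\varepsilon$, so $u=x-e$ would have a nonzero degree-$\gamma$ component ($x$) and a nonzero degree-$\varepsilon$ component ($-e$), contradicting that $u$ is homogeneous; hence $e=0$ and $x=u$ is invertible.

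I expect the \emph{main obstacle} to be bookkeeping rather than conceptual: carefully tracking which summand of the decomposition lives in which graded component, and in particular noticing that in the $\gamma\neq\varepsilon$ case the homogeneity of $u$ (a single-degree element) collides with $x-e$ having two distinct nonzero homogeneous components. One must also remember that the zero element is homogeneous of every degree, so it needs a separate small check to ensure the $\Leftarrow$ direction covers it. Beyond the three homogeneity facts above, no deeper machinery is required.
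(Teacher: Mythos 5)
Your proof is correct and follows essentially the same route as the paper's: both directions are handled by the same case split on the degree of $x$, using that nonzero homogeneous idempotents lie in $R_\varepsilon$ and that homogeneous units have homogeneous inverses. You simply spell out the homogeneity bookkeeping that the paper's terser proof leaves implicit.
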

\begin{proof}
If Cln$_{\gr}$ holds for $R$ then $R_\varepsilon$ is clean. If $0\neq x\in R_\gamma$ for some $\gamma\neq \varepsilon$ and $x=u+e$ for some $e=e^2\in R_\varepsilon$ and $u\in R_\delta\cap U(R),$ then $\gamma=\delta$ and $e=0.$ Thus, $x=u$ is invertible.  

Conversely, let $x\in R_\gamma.$ If $\gamma=\varepsilon,$ then $x$ is in $R_\varepsilon$ so a clean decomposition of $x$ in $R_\varepsilon$ is a graded clean decomposition of $x$ in $R.$  
If $\gamma\neq\varepsilon$ and $x=0,$ $x=1+(-1)$ is a graded clean decomposition. If $\gamma\neq \varepsilon$ and $x\neq 0,$ then $x\in U(R)$ and so $x=0+x$ is a graded clean decomposition. 
\end{proof}

By this lemma, a graded division ring is graded clean. The converse does not hold (take a clean ring which is not a division ring and grade it trivially). 

We use Lemma \ref{graded_clean_lemma} to characterize when a graded matrix ring is graded clean next. 

\begin{proposition} Let $n$ be a positive integer, $\gamma_1,\gamma_2,\ldots,\gamma_n\in\Gamma,$ and $R$ be a $\Gamma$-graded unital ring.  
\begin{enumerate}[\upshape(1)]
\item If $R$ is trivially graded, then $\M_n(R)(\gamma_1, \gamma_2\ldots,\gamma_n)$ is graded clean if and only if $\gamma_1=\gamma_2=\ldots=\gamma_n$ and $\M_n(R)$ is clean.
\item If $R$ is not trivially graded, then $\M_n(R)(\gamma_1, \gamma_2\ldots,\gamma_n)$ is graded clean if and only if $n=1$ and $R$ is graded clean.  
\end{enumerate}
\label{graded_clean_matrices} 
\end{proposition}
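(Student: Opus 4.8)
The plan is to reduce everything to Lemma \ref{graded_clean_lemma} applied to the graded ring $S=\M_n(R)(\gamma_1,\dots,\gamma_n)$. First I would record the grading explicitly: a matrix $(r_{ij})$ is homogeneous of degree $\delta$ precisely when $r_{ij}\in R_{\gamma_i^{-1}\delta\gamma_j}$ for all $i,j$, so in particular the identity component is $S_\varepsilon=\{(r_{ij}) : r_{ij}\in R_{\gamma_i^{-1}\gamma_j}\}$. By Lemma \ref{graded_clean_lemma}, $S$ is graded clean if and only if $S_\varepsilon$ is clean and every nonzero element of a component $S_\delta$ with $\delta\neq\varepsilon$ is invertible in $\M_n(R)$. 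Thus the entire argument comes down to exhibiting, in each case where graded cleanness should fail, a single nonzero homogeneous matrix of nonidentity degree that is not invertible.

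For part (1), I would assume $R$ is trivially graded, so $1\in R_\varepsilon$ and $R_\gamma=0$ for $\gamma\neq\varepsilon$. If $\gamma_1=\dots=\gamma_n=:\gamma$, then $R_{\gamma_i^{-1}\delta\gamma_j}=R_{\gamma^{-1}\delta\gamma}$ is nonzero only for $\delta=\varepsilon$, so $S$ is itself trivially graded with $S_\varepsilon=\M_n(R)$; hence $S$ is graded clean if and only if $\M_n(R)$ is clean. If instead $\gamma_i\neq\gamma_j$ for some pair $i\neq j$, I would take the matrix unit with $1$ in position $(i,j)$: it is homogeneous of degree $\gamma_i\gamma_j^{-1}\neq\varepsilon$, it is nonzero, and it is nilpotent hence not invertible in $\M_n(R)$, so Lemma \ref{graded_clean_lemma} shows $S$ is not graded clean. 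Together these force the equality of the shifts and complete part (1).

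For part (2), I would assume $R$ is not trivially graded and fix $0\neq r\in R_\gamma$ with $\gamma\neq\varepsilon$. To force $n=1$, suppose $n\geq 2$ and consider the matrix $A$ with $r$ in position $(1,1)$ and $0$ elsewhere: its only nonzero entry lies in $R_{\gamma_1^{-1}\delta\gamma_1}$ with $\delta=\gamma_1\gamma\gamma_1^{-1}$, which is $\neq\varepsilon$ because $\gamma\neq\varepsilon$, so $A$ is a nonzero homogeneous element of nonidentity degree; but $A$ has a zero row and so is not invertible in $\M_n(R)$, contradicting Lemma \ref{graded_clean_lemma}. Hence $n=1$, and $S=\M_1(R)(\gamma_1)$ is just $R$ carrying the regrading $S_\delta=R_{\gamma_1^{-1}\delta\gamma_1}$. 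Since this regrading leaves $S_\varepsilon=R_\varepsilon$ unchanged and only relabels the nonidentity components among themselves via the bijection $\delta\mapsto\gamma_1^{-1}\delta\gamma_1$ of $\Gamma\setminus\{\varepsilon\}$, the criterion of Lemma \ref{graded_clean_lemma} holds for $S$ exactly when it holds for $R$; that is, $S$ is graded clean if and only if $R$ is graded clean, which yields both directions.

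The only genuinely delicate points I anticipate are the degree computations certifying that the two witness matrices sit in a nonidentity component (so that Lemma \ref{graded_clean_lemma} indeed demands their invertibility), and the remark that $\delta\mapsto\gamma_1^{-1}\delta\gamma_1$ is a degree-preserving relabeling that does not disturb the criterion of Lemma \ref{graded_clean_lemma}. Everything else is routine bookkeeping; the main subtlety is choosing witnesses that are \emph{simultaneously} homogeneous of nonidentity degree and non-invertible, which is why an off-diagonal matrix unit is the right choice when the shifts differ, whereas a single diagonal entry from a nontrivial homogeneous component of $R$ is the right choice when $R$ is not trivially graded.
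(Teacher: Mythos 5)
Your proposal is correct and follows essentially the same route as the paper's proof: both reduce to Lemma \ref{graded_clean_lemma}, use the off-diagonal matrix unit $e_{ij}\in S_{\gamma_i\gamma_j^{-1}}$ to force equality of the shifts in part (1), and use a noninvertible homogeneous matrix supported in the $(1,1)$ entry to force $n=1$ in part (2). Your explicit verification that the conjugation $\delta\mapsto\gamma_1^{-1}\delta\gamma_1$ fixes the $\varepsilon$-component and permutes the others is just a spelled-out version of the paper's appeal to the graded isomorphism $\M_1(R)(\gamma_1)\cong_{\gr}(\gamma_1^{-1})R(\gamma_1)$.
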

\begin{proof}
Let $S=\M_n(R)(\gamma_1, \gamma_2\ldots,\gamma_n).$ 
The standard matrix unit $e_{ij}$ is in $S_{\gamma_i\gamma_j^{-1}}$ so it is homogeneous. 

(1) If $S$ is graded clean and $\gamma_i\neq \gamma_j,$ then $e_{ij}$ is not invertible and it is in $S_{\gamma_i\gamma_j^{-1}}\neq S_\varepsilon.$ By Lemma \ref{graded_clean_lemma} this cannot happen and so $\gamma_i=\gamma_j.$ Since $R$ is trivially graded, $S_\varepsilon=\M_n(R_\varepsilon)=\M_n(R).$ The assumption that $S$ is graded clean implies that $S_\varepsilon=\M_n(R)$ is clean by Lemma \ref{graded_clean_lemma}. To show the converse, assume that $\gamma_1=\gamma_2=\ldots=\gamma_n$ and that $\M_n(R)=S_\varepsilon$ is clean. Then $S_\gamma=0$ for all $\gamma\neq \varepsilon.$ Thus, $S$ is graded clean by Lemma \ref{graded_clean_lemma}. 

(2) As $R$ is not trivially graded, there is $\delta\neq \varepsilon$ so that $R_{\gamma_1^{-1}\delta\gamma_1}\neq 0.$ Let $0\neq x\in R_{\gamma_1^{-1}\delta\gamma_1}.$ If $n>1,$ then  $xe_{11}\in S_\delta$ is noninvertible. If $S$ is graded clean, this cannot happen, so $n=1.$ Since $S=\M_1(R)(\gamma_1)\cong_{gr} (\gamma_1^{-1})R(\gamma_1)$ is graded clean, so is $R.$ Conversely, if $n=1,$ then $S\cong_{\gr}(\gamma_1^{-1})R(\gamma_1).$ The graded cleanness of $R$ implies the graded cleanness of $(\gamma_1^{-1})R(\gamma_1)$. 
\end{proof}

Proposition \ref{graded_clean_matrices} shows that Cln$_{\gr}$ is not closed under the formation of graded matrix rings. The property Cln$_{\gr}$ is also not closed under the formation of direct sums. Indeed, $K[x, x^{-1}]$ is graded clean but $(x,0)$ is a homogeneous element of $R=K[x, x^{-1}]\oplus K[y, y^{-1}]$ which is not invertible, so $R$ is not graded clean by Lemma \ref{graded_clean_lemma}. 

The next example shows that the conditions UR$_{\gr}$ and Cln$_{\gr}$ are independent properties. 
 
\begin{example}\label{example_not_morita}
Let $K$ be a field trivially graded by $\Zset,$ let $K[x^2, x^{-2}]$ be naturally $\Zset$-graded (see section \ref{subsection_graded_rings_prerequisites}), and let $R=\M_2(K[x^2, x^{-2}])(0, 1).$ By \cite[Proposition 5.1]{Lia_cancellation}, $R$ is graded unit-regular. By Proposition \ref{graded_clean_matrices}, $R$ is not graded clean. Thus, UR$_{\gr}\nRightarrow$ Cln$_{\gr}.$

If a unital ring which is clean but not unit-regular (for example, the ring from \cite[Section 3]{Ster}) is graded trivially by any group, we obtain an example showing that Cln$_{\gr}\nRightarrow$ UR$_{\gr}.$
\end{example}
 
The pairs
(Cln, Cln$_{\gr}$) and (Exch, Exch$_{\gr}$) are pairs of mutually independent properties. 

\begin{example}
The $\Zset$-graded ring $\M_2(K)(0,1)$ is clean (because $\M_2(K)$ is clean) but not graded clean by Proposition \ref{graded_clean_matrices}.  

The ring $K[x, x^{-1}]$ is a graded field so it is graded clean and graded exchange. However, $K[x, x^{-1}]$ is neither clean nor exchange. One can see this last fact by representing this algebra as a Leavitt path algebra of the graph $\xymatrix{\bullet\ar@(ru,rd) }\;\;\;\;\,$ and using  \cite[Theorem 4.5]{Gonzalo_Pardo_Molina_exchange}. 

The Leavitt algebra $L_K(1, 2)$ is a universal example of a $K$-algebra $R$ such that $R^2\cong R,$ and, also, it is the Leavitt path algebra of the graph $\;\;\;\;\xymatrix{\ar@(lu,ld)\bullet\ar@(ru,rd) }\;\;\;\;$. This algebra is exchange (by \cite[Theorem 4.5]{Gonzalo_Pardo_Molina_exchange}) and neither graded left nor graded right exchange by Proposition \ref{graded_exchange_necessary} (Example \ref{example_gr_reg_not_gr_exch} has more details).
\end{example}  

\subsection{Unital graded exchange}
\label{subsection_unital_graded_exchange} 
If the grading is trivial, Exch$_{\gr}^r$ and Exch$_{\gr}^l$ are equivalent. Recall that this holds by the proof of 
\begin{center}
Exch$^r$ holds for $\End_R(R_R)\;\Rightarrow\;$ FinExch$(R_R)$ holds $\;\Rightarrow\;$ Exch$^l$ holds for $\End_R(R_R).$ 
\end{center}
This proof does not transfer to the graded case. Also, the condition ``FinExch$(R_R)$ holds'' is equivalent with $\End_R(R_R)_\varepsilon$ being exchange which is strictly weaker than Exch$_{\gr}^r$ or Exch$_{\gr}^l$ holding on $\End_R(R_R)$  (Example \ref{example_gr_reg_not_gr_exch} shows this). However, if $R$ and its opposite ring are graded isomorphic, then the conditions Exch$_{\gr}^r$ and Exch$_{\gr}^l$ are equivalent. This is the case when $R$ is a graded involutive ring (i.e. if $R$ has an involution $*$ such that $(R_\gamma)^*\subseteq R_{\gamma^{-1}}$ for any $\gamma\in\Gamma$). The rings we focus on (graded matrix algebras over graded fields and Leavitt path algebras) are graded involutive.  

If $R$ is a graded general ring, let Lift$^r_{\gr}$ denote the condition that for every homogeneous $x\in R,$ there is a homogeneous idempotent $e \in R$ such that $e-x\in (x-x^2)R.$ If $R$ is unital, Exch$^r_{\gr}$ and Lift$^r_{\gr}$ are equivalent. Indeed, following the proof of Exch $\Leftrightarrow$ Lift from \cite[Proposition 1.1]{Nicholson_clean}, one can take $e$ from Exch$^r_{\gr} $ condition and use it in Lift$^r_{\gr}$ condition since 
if $e=xr$ and $1-e=(1-x)s,$ then $e-x=e-xe-x+xe=(1-x)e-x(1-e)=(1-x)xr-x(1-x)s=(x-x^2)(r-s).$ Conversely, if Lift$^r_{\gr}$ holds for $x\in H$ with $e=e^2\in H,$ and $e-x=(x-x^2)r,$ then $e=x+(x-x^2)r\in xR$ and $1-e=1-x-(x-x^2)r=(1-x)(1-xr)\in (1-x)R$ showing Exch$^r_{\gr}.$

Since Exch$^r_{\gr}$ $\Leftrightarrow$ Lift$^r_{\gr},$ the same argument showing Cln $\Rightarrow$ Exch 
can be used for Cln$_{\gr}$ $\Rightarrow$ Exch$^r_{\gr}.$ As Cln$_{\gr}$ is left-right symmetric, we have that Cln$_{\gr}\Rightarrow$ Exch$_{\gr}.$ 

We explore some conditions under which certain graded matrix algebras are graded exchange. 

\begin{proposition}
Let $K$ be a trivially $\Zset$-graded field, $m, n$ positive integers, and $\gamma_1,\ldots,\gamma_n\in \Zset.$ Then, the following hold.
\begin{enumerate}[\upshape(1)]
\item $\M_n(K)(\gamma_1,\ldots, \gamma_n)$ is graded exchange. 
 
\item Let $k_i$ denote the cardinality of $\{\gamma\in\{\gamma_1,\ldots, \gamma_n\}\mid \gamma\equiv i$ modulo $m\}$ for $i=0,\ldots, m-1.$ If  $k_i$ is either 0 or 1  for each $i=0,\ldots, m-1,$ then $\M_n(K[x^m, x^{-m}])(\gamma_1,\ldots, \gamma_n)$ is graded exchange.  
\end{enumerate}
\label{matrices_graded_exchange}
\end{proposition}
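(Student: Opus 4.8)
The plan is to verify the graded right exchange condition directly, producing for each homogeneous element a homogeneous idempotent witnessing it, and then to upgrade to Exch$_{\gr}$ for free: since $K$ (trivially graded) and $K[x^m,x^{-m}]$ are both graded fields, in both parts $S$ is a graded matrix algebra over a graded field, hence graded involutive, so Exch$^l_{\gr}$ follows from Exch$^r_{\gr}$ by the remark in subsection \ref{subsection_unital_graded_exchange}. Throughout I write $S$ for the matrix ring in question and use that the $(i,j)$-entry of an element of $S_\delta$ lies in $R_{\delta+\gamma_j-\gamma_i}$ (in additive $\Zset$ notation). A first useful observation is that every homogeneous idempotent of $S$ lies in $S_0$, since $e=e^2\in S_{2\deg e}$ forces $\deg e=0$; the idempotents I produce will accordingly be sums of diagonal matrix units $e_{ii}$.

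For part (1), with $K$ trivially graded, I would split on the degree $\delta$ of a homogeneous $x$. When $\delta=0$, grouping the indices according to the value of $\gamma_i$ shows $S_0\cong\prod_s\M_{n_s}(K)$, where $n_s$ is the number of indices sharing a given value of $\gamma_i$; this is a semisimple ring and hence exchange, so any exchange idempotent for $x$ inside $S_0$ is homogeneous and lies in $xS_0\subseteq xS$ with complement in $(1-x)S_0\subseteq(1-x)S$. When $\delta\neq0$, the element $x$ is nilpotent: its powers satisfy $x^k\in S_{k\delta}$, and since $S$ is finite dimensional over $K$ it has only finitely many nonzero homogeneous components, so $S_{k\delta}=0$ for large $k$. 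Then $e=0$ works, because $1-x$ is a unit and $1=(1-x)(1-x)^{-1}\in(1-x)S$.

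For part (2) the ring $R=K[x^m,x^{-m}]$ is a graded field, so every nonzero homogeneous entry is invertible. The hypothesis that each $k_i$ is $0$ or $1$ means the $\gamma_i$ occupy distinct residue classes modulo $m$, so for a fixed degree $\delta$ the condition $m\mid\delta+\gamma_j-\gamma_i$ admits at most one $j$ per row and at most one $i$ per column; thus the support of any homogeneous $x\in S_\delta$ lies on the graph of a partial bijection $\sigma$ of $\{1,\dots,n\}$. I would decompose $\{1,\dots,n\}$ into the orbits of $\sigma$ (cycles and chains) and note that $x$, $e$, and $1-x$ are all block diagonal with respect to this decomposition. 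The key computation is that on a cyclic orbit $O$ of length $k$ one has $(x|_O)^k=(\prod_t w_t)\,I_O$, where $w_1,\dots,w_k$ are the entries of $x$ along the cycle; going once around the cycle the $\gamma$-contributions telescope, so $\prod_t w_t$ is a homogeneous element of $R$ of degree $k\delta$, hence either a unit (when all $w_t\neq0$) or $0$ (when some $w_t=0$, since $R$ is a domain). Therefore $x$ restricts to a unit on each full cycle with all entries nonzero and to a nilpotent on every other orbit (the remaining cycles and all chains, including singletons). Setting $e=\sum_{i\in U}e_{ii}$ with $U$ the union of the unit cycles gives a homogeneous idempotent, and block diagonality reduces $e\in xS$ and $1-e\in(1-x)S$ to the one-block statements, which hold because $x|_O$ is a unit on the blocks forming $e$ while $I_O-x|_O$ is a unit (as $x|_O$ is nilpotent) on the remaining blocks. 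Note this treatment is uniform in $\delta$: when $m\mid\delta$ the map $\sigma$ is the identity and the orbits are the length-one cycles $e_{ii}$ with weight in $R_\delta$.

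I expect the main obstacle to be part (2): one must extract the partial-bijection structure from the distinct-residue hypothesis, run the cycle computation carefully to conclude that each block is either a unit or a nilpotent, and then assemble the local idempotents into one homogeneous idempotent while checking that block diagonality genuinely transports the local relations $e|_O\in(x|_O)\M_{|O|}(R)$ and $(1-e)|_O\in(I_O-x|_O)\M_{|O|}(R)$ to global membership in $S$. Part (1) is comparatively routine, the only points needing care being the semisimplicity of $S_0$ and the finite-support nilpotency argument.
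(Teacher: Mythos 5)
Your proof is correct, and for part (2) it takes a genuinely different route from the paper's. Part (1) is essentially the paper's argument: both reduce to $S_0$ being a product of matrix algebras over $K$ in degree zero and to nilpotency of $x$ (hence invertibility of $1_S-x$, hence $e=0$) in nonzero degree; the paper gets the nilpotency by ordering the shifts and observing strict triangularity of $S_l$ for $l\neq 0$, while you get it from finite-dimensionality, which is an immaterial difference. For part (2), the paper instead argues via a trichotomy --- every homogeneous $a$ is right invertible, or $1_S-a$ is right invertible, or $a$ is a $K$-linear combination of diagonal matrix units --- established by an explicit computation when $n=m$ and an induction on $m-n$ otherwise. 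That explicit computation treats the support pattern of $a$ as a single $m$-cycle, which is the situation when $\gcd(\delta,m)=1$; when the underlying (partial) permutation splits into several cycles, one can have an invertible block alongside a nilpotent one, in which case neither $a$ nor $1_S-a$ is right invertible and $a$ is not diagonal. For instance, with $m=4$, shifts $(0,1,2,3)$ and $a=x^{4}e_{13}+e_{31}\in S_2$, the $\{1,3\}$-block of $a$ is invertible while the $\{2,4\}$-block is zero, and the required idempotent is $e=e_{11}+e_{33}$, which is exactly what your construction $e=\sum_{i\in U}e_{ii}$ over the unit cycles produces. So your orbit decomposition of the partial bijection, together with the per-block unit-or-nilpotent dichotomy coming from the telescoping degree computation and the fact that $K[x^m,x^{-m}]$ is a graded field and a domain, is the more robust argument: it covers all residues $\delta$ and all cycle structures uniformly, and it absorbs the $k_i=0$ case without the embedding-and-induction step. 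What the paper's computation buys in exchange is an explicit right inverse of $1_S-a$ in the cases it treats; your blockwise verification of $e\in aS$ and $1_S-e\in(1_S-a)S$ (multiplying by the orbit projections, which commute with $a$ and with $e$) is the step to write out carefully, but it goes through exactly as you describe.
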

\begin{proof} 
Graded rings in both parts are graded involutive
(\cite{Roozbeh_Lia_Ultramatricial} has more details), so showing that Exch$_{\gr}^r$ holds ensures that  Exch$_{\gr}^l$ also holds.  

To show (1), let $S=\M_n(K)(\gamma_1,\ldots, \gamma_n).$ By Lemma \ref{lemma_on_shifts}, we can assume that $\gamma_1=0$ and $\gamma_i\leq \gamma_{i+1}$ for all $i=1,\ldots n-1.$ Then $S_l$ consists of matrices with zeros on and above the main diagonal if $l>0,$ and, $S_l$ consists of matrices with zeros on and below the main diagonal if $l<0.$ Thus, $1_S-a$ is invertible for any $a\in S_l$ if $l\neq 0.$ So, $e=0$ is such that $e\in aS$ and $1_S=1_S-e\in (1_S-a)S=S.$ Since $S_0$ is a direct sum of matrix algebras over $K,$ $S_0$ is exchange.   

To show (2), let $S=\M_n(K[x^m, x^{-m}])(\gamma_1,\ldots, \gamma_n).$ By Lemma \ref{lemma_on_shifts}, we can assume that $\gamma_j\in\{0,1,\ldots, m-1\}$ and that $\gamma_j\leq \gamma_{j+1}$ for all $j=1,\ldots n.$ 
Assume that $k_i=0$ or $k_i=1$ for all $i=0,\ldots, m-1.$ For an arbitrary homogeneous element $a$ of $S,$ we claim that either $a$ is right invertible, $1_S-a$ is right invertible, or $a$ is in the $K$-linear span of some of the standard matrix units $e_{ii}, i=1,\ldots, n.$ By taking $e=1_S$ in the first case, $e=0$ in the second case, and $e=\sum_{a_{ii}\neq 0}e_{ii}$ in the third case, we obtain a homogeneous idempotent $e$ such that $e\in aS$ and $(1_S-e)\in (1_S-a)S.$ 
 
Let $a$ be in $S_{l'}$ and let $l'=km+l,$ for some $l=0,\ldots, m-1.$ Consider the cases $l=0$ and $l\neq 0.$

If $l=0,$ then $a$ is diagonal with the diagonal entries in $Kx^{km}.$ Thus, $a=\sum_{a_{ii}\neq 0}a_{ii}e_{ii}.$

If $l\neq 0,$ we consider two cases: (i) $k_i=1$ for all $i$ and, (ii) $k_i=0$ for some $i.$ 

If all $k_i$ are 1, then $n=m,$ $S=\M_m(K(x^m, x^{-m}))(0,1, \ldots, m-1),$ and $a$ has the following form for some  $a_1, \ldots, a_m\in K.$ {\small
\[a=\left[
\begin{array}{cccc|cccc}
0 & 0 & \ldots & 0 & a_1x^{km} & 0         & \ldots & 0\\
0 & 0 & \ldots & 0 & 0         & a_2x^{km} &\ldots & 0\\ 
\vdots & \vdots & \ddots & \vdots & \vdots & \vdots & \ddots & 0\\
0 & 0 & \ldots  & 0 & 0 & 0 & \ldots  &  a_lx^{km} \\ \hline
a_{l+1}x^{(k+1)m} & 0 & \ldots & 0 & 0 & 0 & \ldots &  0\\ 
0 & a_{l+2}x^{(k+1)m} & \ldots & 0 & 0 & 0 & \ldots &  0\\
\vdots & \vdots & \ddots & \vdots & \vdots & \vdots & \ddots & \vdots\\
0 & 0 & \ldots & a_mx^{(k+1)m} & 0 & 0 & \ldots &  0\\
\end{array}
\right]\]}

\noindent We explore the condition that $1_S-a$ is right invertible. If $(1_S-a)b=1_S$ for some $b=[b_{ij}],$ 
then  
\[b_{ij}-a_ix^{km}b_{(m-l+i)j}=\delta_{ij}\mbox{ for }i=1,\ldots, l\mbox{ and }j=1,\ldots, m\;\mbox{ and}\]
\[b_{ij}-a_ix^{(k+1)m}b_{(i-l)j}=\delta_{ij}\mbox{ for }i=l+1,\ldots, m\mbox{ and }j=1,\ldots, m\]
where $\delta_{ij}$ stands for $\delta_{ij}=1$ if $i=j$ and 0 otherwise. If we fix $j=1,\ldots, m$ and consider these equations as equations in $m$ unknowns $b_{ij}, i=1,\ldots, m,$ then one can  eliminate the variables $b_{ij}$ for $i\neq j$ using the relations above and reduce the system to the following single equation in $b_{jj}.$   
\[b_{jj}\left(1-\left(\prod_{i=1}^m a_i\right) x^{m((k+1)m-l)}\right)=1\]
Thus, if any of $a_1,\ldots, a_m$ is zero, this equation has a unique solution $b_{jj}=1$ which determines the values of all other $b_{ij}$ for $i\neq j$ so $1_S-a$ is right invertible. 

If none of $a_1,\ldots, a_m$ is zero, then $a$ is an invertible matrix with the inverse 
 {\small
\[a^{-1}=\left[
\begin{array}{cccc|cccc}
0 & 0 & \ldots & 0 & a_{l+1}^{-1}x^{-(k+1)m} & 0         & \ldots & 0\\
0 & 0 & \ldots & 0 & 0         & a_{l+2}^{-1}x^{-(k+1)m} &\ldots & 0\\ 
\vdots & \vdots & \ddots & \vdots & \vdots & \vdots & \ddots & 0\\
0 & 0 & \ldots  & 0 & 0 & 0 & \ldots  &  a_m^{-1}x^{-(k+1)m} \\ \hline
a_1^{-1}x^{-km} & 0 & \ldots & 0 & 0 & 0 & \ldots &  0\\ 
0 & a_2^{-1}x^{-km} & \ldots & 0 & 0 & 0 & \ldots &  0\\
\vdots & \vdots & \ddots & \vdots & \vdots & \vdots & \ddots & \vdots\\
0 & 0 & \ldots & a_l^{-1}x^{-km} & 0 & 0 & \ldots &  0\\
\end{array}
\right].\]}  

It remains to consider the case when some $k_i$ is zero in which case $n<m.$ Adding $m-1-i$ to all shifts, we can assume that $i=m-1.$ We use  induction on $m-n$ to show that $1_S-a$ is right invertible. If $m-n=1,$ then $k_0=\ldots =k_{m-2}=1$ and $S$ embeds in 
$S'=\M_{m}(0,1, \ldots,  m-1)$ by $\phi: s\mapsto 
\left[\begin{array}{cc}
s & 0\\
0 & 0
\end{array}\right]$ for $s\in S.$ Since $\phi$ is a graded map,  $\phi(a)$ is homogeneous in $S'.$ As $\phi(a)$ is not right invertible in $S',$ $1_{S'}-\phi(a)$ is right invertible by case (i). If a right inverse of $1_{S'}-\phi(a)$ has $b$ in the upper-left block, then $b$ is a right inverse of $1_S-a.$  

Assuming the induction hypothesis, let us show the claim for $m-n>1.$ In this case, there is a graded embedding $\phi$ of $S$ in 
$S'=\M_{n+1}(k_0(0),k_1(1), \ldots,  k_{m-2}(m-2), m-1)$ where $k_i(i)$ is the empty list if $k_i=0$ and $k_i(i)=i$ if $k_i=1.$ By the induction hypothesis, $1_{S'}-\phi(a)$ is right invertible. If $b$ is the upper-left block, then $b$ is a right inverse of $1_S-a.$  
\end{proof} 

Using Proposition \ref{matrices_graded_exchange}, we show that the implication Cln$_{\gr}\Rightarrow$ Exch$_{\gr}$ is strict.

\begin{example}
Let $R=\M_2(K[x^2, x^{-2}])(0,1)$ (note that $R$ can also be realized as the Leavitt path algebra of the graph  $\xymatrix{ {\bullet} \ar@/^1pc/ [r]   & {\bullet} \ar@/^1pc/ [l]}$). By Proposition \ref{matrices_graded_exchange}, $R$ is graded exchange but, by Proposition \ref{graded_clean_matrices}, $R$ is not graded clean. Thus, Exch$_{\gr}\nRightarrow$ Cln$_{\gr}$. 
\label{example_Cln_gr_sledi_Exch_gr_is_strict}
\end{example}

Exch$^r_{\gr}$ is closed under the formation of graded corners (i.e. the corner formed by a homogeneous idempotent) and graded direct limits (the proof is completely analogous to the proof of the non-graded case). Besides these favorable properties, Exch$^r_{\gr}$ is still a rather restrictive condition. For instance, Example \ref{example_gr_reg_not_gr_exch} shows that 
Reg$_{\gr}\;\nRightarrow$ Exch$^r_{\gr}.$

\subsection{Nonunital graded exchange}\label{subsection_graded_exchange}
The equivalence of the following conditions enables us to consider Exch$^r_{\gr}$ in the nonunital case also. 

\begin{proposition}
If $R$ is a graded general ring and $\gamma\in \Gamma,$ the following conditions are equivalent. 
\begin{enumerate}[\upshape(1)]
\item For any $x\in R_\gamma,$ there is $e=e^2\in R_\varepsilon$ such that $e\in -xR$ and $e\in x\ast R.$

\item For any $x\in R_\gamma,$ there is $e=e^2\in R_\varepsilon$ such that $e\in xR$ and $e\in x\circ R.$

\item For any $x\in R_\gamma,$ there is $e=e^2\in R_\varepsilon$ such that $e\in xR$ and $(-e, 1)\in (-x, 1)R^u.$

\item For any graded embedding $\phi:R\to S$ such that $S$ is a graded unitization of $R,$ and for any $x\in R_\gamma,$ there is $e=e^2\in R_\varepsilon$ such that $e\in xR$ and  $1-\phi(e)\in (1-\phi(x))S.$ 
\end{enumerate}
If $R$ is graded unital, then $R$ is graded right exchange if and only if the above conditions hold. 
\label{graded_exchange}
\end{proposition}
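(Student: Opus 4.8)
The plan is to take condition (2)---which is exactly the intrinsic (Ara) exchange condition restricted to the degree-$\gamma$ component---as the hub, to prove (2)$\Leftrightarrow$(3)$\Leftrightarrow$(4) by pushing the defining relation through the various unitizations, and to prove (1)$\Leftrightarrow$(2) by transporting along the monoid isomorphism of $(R,\ast)$ onto $(R,\circ)$. All four conditions share the clause ``$e=e^2\in R_\varepsilon$ with $e\in xR$'' (note $-xR=xR$, so condition (1)'s first clause is the same), so only the second clauses must be matched.

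For (2)$\Leftrightarrow$(3)$\Leftrightarrow$(4) the single computation is the identity $1-(x\circ r)=(1-x)(1-r)$, which remains valid after applying any unitization embedding $\phi\colon R\to S$ since $\phi$ is a homomorphism and $x\circ r$ is built from the ring operations. Thus if $e=x\circ r$, then $1-\phi(e)=(1-\phi(x))(1-\phi(r))\in(1-\phi(x))S$, giving (2)$\Rightarrow$(4); specializing to $S=R^u$, where $1-\phi(e)=(-e,1)$ and $1-\phi(x)=(-x,1)$, yields (4)$\Rightarrow$(3). Conversely, in $R^u$ one has $(-x,1)(-r,1)=(-(x\circ r),1)$, and since the second coordinate of $(-e,1)$ is $1$, any factorization $(-e,1)=(-x,1)(s,t)$ forces $t=1$ and $s=-r$; reading off the first coordinate gives $e=x\circ r$, i.e. (3)$\Rightarrow$(2). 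This settles the equivalence of (2), (3), (4).

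For (1)$\Leftrightarrow$(2)---the delicate step---I would use the isomorphism $a\mapsto -a$ of $(R,\ast)$ onto $(R,\circ)$, which yields the identity $x\circ r=-\big((-x)\ast(-r)\big)$. Because $R_\gamma$ and $R_\varepsilon$ are closed under negation and $-xR=xR$, this identity matches the $\circ$-clause at $x$ with the $\ast$-clause at $-x$, carrying the homogeneous idempotent along. One then uses that both (1) and (2) are quantified over all of $R_\gamma$: since $x\mapsto -x$ is a bijection of $R_\gamma$, the statement ``(1) for every $x\in R_\gamma$'' coincides with ``(2) for every $x\in R_\gamma$''. I expect this to be the main obstacle, since per individual element the $\ast$- and $\circ$-clauses are genuinely different; the equivalence must therefore be run at the level of the negation-closed quantifier, with careful sign and idempotent bookkeeping through the isomorphism rather than by a naive substitution.

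Finally, for the unital addendum: a nonzero homogeneous idempotent $e\in R_\delta$ satisfies $e=e^2\in R_{\delta^2}$, forcing $\delta=\varepsilon$, so in the definition of Exch$^r_{\gr}$ the homogeneous idempotent automatically lies in $R_\varepsilon$. Combining this with $1-e\in(1-x)R\Leftrightarrow e\in x\circ R$ (the unital specialization of the identity above) shows that, for each homogeneous $x$, the exchange-element condition is precisely condition (2); letting $\gamma$ range over $\Gamma$ then identifies graded right exchange for the graded unital ring $R$ with the conjunction of the four equivalent conditions over all $\gamma$.
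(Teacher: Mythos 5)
Your treatment of (2)$\Leftrightarrow$(3)$\Leftrightarrow$(4) and of the unital addendum is correct and is essentially the paper's own argument: the identity $1-(x\circ r)=(1-x)(1-r)$, pushed through an arbitrary graded unitization and specialized to $R^u$ (where $1-(e,0)=(-e,1)$ and the second coordinate forces the factor to have the form $(s,1)$), together with the observation that a nonzero homogeneous idempotent necessarily lies in $R_\varepsilon$, is all that is needed there.

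The gap is in (1)$\Leftrightarrow$(2), and you have correctly located it but not closed it. Your identity $x\circ r=-\bigl((-x)\ast(-r)\bigr)$ translates ``$e\in x\circ R$'' into ``$-e\in(-x)\ast R$'': the sign lands on the idempotent, not on the ring element. Since $-e$ is not an idempotent, letting $x$ range over the negation-closed set $R_\gamma$ cannot absorb this sign, so the proposed ``quantifier-level'' argument does not go through. In fact, with clause (1) read literally as ``$e\in x\ast R$'' the equivalence fails: for $R=\mathbb{Q}$ trivially graded and $x=-1$ one has $x\ast R=\{-1+r-r\mid r\in\mathbb{Q}\}=\{-1\}$, which contains no idempotent, so (1) fails while (2) holds. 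The reading under which the paper's one-line proof (``apply (1) to $-x$'') works is ``$-e\in x\ast R$'' (equivalently $e\in-(x\ast R)$): then (1) applied to $-x$ gives $-e=(-x)\ast r=-x+r-xr$, hence $e=x\circ(-r)\in x\circ R$, which is (2) with the same idempotent $e$, and the converse is symmetric. You should either prove the statement with this corrected clause, or record explicitly that the literal clause ``$e\in x\ast R$'' is not equivalent to (2); deferring the issue to ``careful sign and idempotent bookkeeping'' leaves the key step unproved.
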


\begin{proof}
Assuming (1) and using it for $-x$ shows (2). Similarly, (2) implies (1).
If (2) holds, then $e=x\circ y$ for some $y\in R.$ Thus, $(-e, 1)=(-x-y+xy, 1)=(-x,1)(-y,1)\in (-x,1)R^u$ so (3) holds. Conversely, if (3) holds and $(-e,1)=(-x,1)(y,k)$ for some $y\in R$ and $k\in \Zset,$ then $k=1$ and $-e=-x+y-xy$ so that $e=x+(-y)-x(-y)=x\circ(-y)\in x\circ R.$  Similarly, (2) $\Leftrightarrow$ (4).  
\end{proof}

We continue to say that a ring is {\em graded right exchange} and that it has  Exch$^r_{\gr}$ if the equivalent conditions above hold. Exch$^l_{\gr}$ and Exch$_{\gr}$ are similarly generalized for graded general rings. 

\begin{proposition}
The property {\em Exch}$^r_{\gr}$ is closed under the formation of graded corners and graded direct limits of graded {\em general} rings. 
\label{exchange_graded_corners_and_dir_lim}
\end{proposition}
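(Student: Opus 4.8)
The plan is to verify both closure properties directly from the characterization in Proposition~\ref{graded_exchange}, using condition~(2): a graded general ring $R$ has Exch$^r_{\gr}$ exactly when for every $x\in R_\gamma$ there is an idempotent $e\in R_\varepsilon$ with $e\in xR$ and $e\in x\circ R$. The single observation that makes the grading harmless is that every homogeneous idempotent lies in the $\varepsilon$-component: if $g=g^2\in R_\delta$ then $g\in R_{\delta^2}$, and since $\Gamma$ is a group, $\delta^2=\delta$ forces $\delta=\varepsilon$. In particular a graded corner is a corner $gRg$ for a homogeneous idempotent $g\in R_\varepsilon$, and $gRg$ is again a graded general ring with $(gRg)_\eta=gR_\eta g$ and with the same operation $\circ$ inherited from $R$.

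For the corner case, I would start with a homogeneous $x\in(gRg)_\gamma$, so $x\in R_\gamma$ and $gx=xg=x$. Applying Exch$^r_{\gr}$ of $R$ yields an idempotent $e\in R_\varepsilon$ with $e=xr$ and $e=x\circ s=x+s-xs$ for some $r,s\in R$. Because $gx=x$, left multiplying the first relation by $g$ gives $ge=e$, and left multiplying the second relation by $g$ then forces $gs=s$. The candidate idempotent for the corner is $f:=eg$: using $ge=e$ one checks $f^2=f$ and $gfg=f$, so $f$ is a homogeneous idempotent in $(gRg)_\varepsilon$. It remains to produce witnesses inside the corner. Here $f=eg=x(grg)$ shows $f\in x(gRg)$, while setting $t:=sg$, which lies in $gRg$ precisely because $gs=s$, gives $x\circ t=x+sg-xsg=eg=f$, so $f\in x\circ(gRg)$. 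By Proposition~\ref{graded_exchange} applied to $gRg$, this establishes Exch$^r_{\gr}$ for the corner.

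The direct limit case is the routine functorial direction and I expect it to be immediate. Let $R=\varinjlim R_i$ be a graded direct limit of graded general rings $R_i$ each having Exch$^r_{\gr}$, with canonical graded maps $\phi_i\colon R_i\to R$. Since $R_\gamma=\varinjlim(R_i)_\gamma$, a homogeneous $x\in R_\gamma$ equals $\phi_i(a)$ for some homogeneous $a\in(R_i)_\gamma$. Choosing an idempotent $e\in(R_i)_\varepsilon$ with $e\in aR_i$ and $e\in a\circ R_i$ and pushing it forward, $\phi_i(e)$ is a homogeneous idempotent in $R_\varepsilon$; as $\phi_i$ is a ring homomorphism it preserves both products and the operation $\circ$, so $\phi_i(e)\in xR$ and $\phi_i(e)\in x\circ R$, giving Exch$^r_{\gr}$ for $R$.

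The main obstacle is the corner case, and within it the only non-formal point is relocating the idempotent $e$ and the $\circ$-witness $s$ into $gRg$ while keeping homogeneity; the identities $gx=x$, $ge=e$, and $gs=s$ are exactly what make the replacements $e\mapsto eg$ and $s\mapsto sg$ land in the corner and still verify both membership conditions. The direct limit case carries no real difficulty beyond the standard fact that homogeneous elements and idempotents lift and transport along the structure maps.
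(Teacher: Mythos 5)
Your proof is correct and follows essentially the same route the paper takes: the corner argument is exactly the adaptation of Ara's Proposition 1.3 that the paper invokes (relocating the idempotent and the $\circ$-witness into $gRg$ via $e\mapsto eg$, $s\mapsto sg$), and the direct-limit argument is the intended use of condition (2) of Proposition \ref{graded_exchange}. The paper merely cites these steps rather than writing them out, so your proposal supplies the details but introduces no new idea.
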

\begin{proof}
The proof of \cite[Proposition 1.3]{Ara_exchange} 
adapts to the graded case to show the claim on  graded corners. Parts (1) and (2) of Proposition \ref{graded_exchange} can be used for the claim on graded direct limits.
\end{proof}

\subsection{Graded local cancellation properties}\label{subsection_graded_local}

Recall that a graded general ring $R$ is said to have graded local units if every finite set of homogeneous elements of $R$ is contained in a graded corner. In this case, one can meaningfully consider the graded (and nonunital) versions of ring properties by considering their graded local versions as follows. If $R$ is a graded general ring and $P$ is a ring property, we say that $R$ has {\em graded local $P$} (and that local $P_{\gr}$ holds for $R$) if every finite set of homogeneous elements of $R$ is contained in a graded corner which has the property $P_{\gr}.$ 

If $R$ is a graded unital ring, then $P_{\gr}\Leftrightarrow$ local $P_{\gr}$ for any property $P.$ For nonunital graded rings, this equivalence does not have to hold. In particular, the proof of (local UR $\Rightarrow$ UR) does not generalize to the graded case since $u\in U(\ast)\cap H$ such that $x=xux+x^2$ for some $x\in H$ can be found only if $x\in R_\varepsilon.$ This enables us to create an example of a locally UR$_{\gr}$ ring such that the nonunital generalization of UR$_{\gr}$ fails.  

\begin{example}
Consider $K[x^2, x^{-2}]$ with the standard $\Zset$-grading (see section \ref{subsection_graded_rings_prerequisites}), let $R_{2n}=\M_{2n}(K[x^2, x^{-2}])(0,1,0,1,\ldots, 1)$ and $R_{2n+1}=\M_{2n+1}(K[x^2, x^{-2}])(0,1,0,1,\ldots, 0).$ The number of shifts which are zero is equal to the number of shifts which are one for $R_{2n}$ so UR$_{\gr}$ holds on $R_{2n}$ by \cite[Proposition 5.1]{Lia_cancellation}. By the same result,  UR$_{\gr}$ does not hold on $R_{2n+1}.$ Embed $R_n$ into $R_{n+1}$ 
by $a\mapsto \left[
\begin{array}{cc}
a & 0\\
0 & 0
\end{array}\right]$ and let $R=\varinjlim_n R_n.$ Every homogeneous matrix of $R_{2n+1}$ embeds in $R_{2n+2}$ which is graded isomorphic to a graded unit-regular corner of $R$. Hence, $R$ is locally UR$_{\gr}$. However, no homogeneous $u\in U(\ast)\cap H$ such that $x=xux+x^2$ can be found for any $x\in H$ when $x$ is not in the 0-component.  
\end{example}

This example also shows that having graded corners which are not graded unit-regular is not an obstruction for a graded locally unital ring to be graded locally unit-regular. This contrasts the non-graded case in which a locally UR ring cannot have non-UR corners. 

For Exch, Exch$^r_{\gr}\Leftrightarrow$ local Exch$^r_{\gr}$ holds for all graded locally unital rings. Indeed, the direction $\Rightarrow$ holds since Exch$^r_{\gr}$ transfers to graded corners and the direction $\Leftarrow$ holds by the proof analogous to the non-graded case (see Proposition \ref{local_P_implies_P}).

\subsection{\texorpdfstring{$\varepsilon$}{TEXT}-cancellation properties}\label{subsection_varepsilon_exchange}
Besides the graded versions of ring properties, one can define the graded versions of module properties as follows. If $P(A)$ is a property of a module $A,$ we let $P_{\gr}(A)$ denote the statement on a graded module $A$ obtained by replacing every instance of ``module'' by ``graded module'' and every instance of ``homomorphism'' by ``graded homomorphism'' in $P(A).$  

Since a graded homomorphism between right modules $A$ and $B$ is an element of $\HOM_R(A,B)_\varepsilon,$  the $\varepsilon$-component of $\END_R(R_R)$ has a special significance. If $P$ is a property of graded (unital or general) rings, let $P_\varepsilon$ denote the requirement of a graded general ring $R$ that $\END_R(R_R)_\varepsilon$ has $P.$

With this definition, if $\END_R(R_R)$ is graded regular, UR$_\varepsilon$ holds for $R$ if and only if $R_R$ has graded internal cancellation.  \cite[Proposition 3.4]{Lia_cancellation} shows this in the unital case using \cite[Theorem 4.1]{Goodearl_book}. This last results holds in the nonunital case and the proof adapts to the grading case also. The condition that $R$ has sr=1$_\varepsilon$ is equivalent with the requirement that the graded module $R_R$ has graded substitution (the proof of \cite[Theorem 4.4]{Lia_cancellation} generalizes to the nonunital case also). We also have that DF$_\varepsilon$ holds for $R$ if and only if $R_R$ is graded directly finite as a graded module (the proof from \cite[Section 4.3]{Lia_cancellation} in the unital case carries to the nonunital case).  

With Cln$_\varepsilon$ and Exch$_\varepsilon$ defined analogously, the implications UR$_\varepsilon$ $\Rightarrow$ sr=1$_\varepsilon$ $\Rightarrow$ DF$_\varepsilon$ and UR$_\varepsilon$ $\Rightarrow$ Cln$_\varepsilon$ $\Rightarrow$ Exch$_\varepsilon$
are direct since the cancellation $\varepsilon$-properties reduce to the non-graded case.  

If $R$ is unital, $\End_R(R_R)=\END_R(R_R)\cong_{\gr} R,$ so $P_{\gr}\;\Rightarrow\; P_\varepsilon$ for any of the cancellation properties $P.$ When $P$ is  UR, sr=1, or DF, this implication is strict by \cite{Lia_cancellation}. The first ring from Example \ref{example_not_morita} shows that Cln$_{\gr} \nRightarrow$ Cln$_\varepsilon.$ The $0$-component of a unital Leavitt path algebra $L_K(E)$ is a matricial algebra over $K$ so Exch$_\varepsilon$ holds. If $E$ has exits, $L_K(E)$ does not have Exch$_{\gr}$ by Proposition \ref{graded_exchange_necessary}. 

The $\varepsilon$-properties are much less restrictive than the $\gr$-properties and sufficient to consider when one is interested in graded versions of module properties. In \cite{Lia_cancellation}, it is shown that UR$_\varepsilon$, sr=1$_\varepsilon$, and DF$_\varepsilon$ retain many properties of their non-graded counterparts. The same holds for Cln$_\varepsilon$ and Exch$_\varepsilon.$ For example, Cln$_\varepsilon$ is closed under the formation of graded matrix rings while Cln$_{\gr}$ is not. Indeed, if $\gamma\in \Gamma,$ then $\M_1(R)(\gamma)=\End_R((\gamma^{-1})R)\cong_{\gr}(\gamma^{-1})R(\gamma).$ The $\varepsilon$-component of this last ring is $R_{\gamma^{-1}\varepsilon\gamma}=R_\varepsilon$, so $R$ has Cln$_\varepsilon$ if and only if $\M_1(R)(\gamma)$ has Cln$_\varepsilon.$ This, combined with the proof that Cln is closed under the formation of matrix rings (\cite[Lemma, page 2598]{Han_Nicholson}) shows the claim.  

Exch$_\varepsilon$ is left-right symmetric since Exch is such. Also, requiring $R_\varepsilon$ to be exchange is sufficient for the graded version of the following important property to hold. We say that homogeneous idempotents {\em lift modulo a graded right ideal} $I$ if, for a homogeneous element $x$ such that $x-x^2\in I,$ there is $e=e^2\in R_\varepsilon$ such that $e-x\in I.$

\begin{proposition}
Let $R$ be a graded unital ring. If $R_\varepsilon$ is exchange, then homogeneous idempotents lift modulo every graded right ideal.  
\label{lifting_idempotents}
\end{proposition}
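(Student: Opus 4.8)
The plan is to reduce the statement about lifting homogeneous idempotents modulo a graded right ideal $I$ to a lifting statement inside the $\varepsilon$-component $R_\varepsilon$, where the exchange hypothesis is available. Suppose $x\in R$ is homogeneous, say $x\in R_\gamma$, and $x-x^2\in I$. First I would observe that since $x-x^2$ is itself homogeneous (as $x$ is), the containment $x-x^2\in I$ is governed by the homogeneous part of $I$ in degree $\gamma^2$; but the key reduction is that $x^2\in R_{\gamma^2}$ while $x\in R_\gamma$, so the relation $x-x^2\in I$ can only be nontrivial in the sense needed when $\gamma=\varepsilon$. More precisely, the idempotent $e$ we seek must lie in $R_\varepsilon$, and $e-x\in I$ forces $x$ to have a nonzero $\varepsilon$-component whenever $e\neq 0$. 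So the first step is to argue that the only homogeneous $x$ for which the lifting is nontrivial is one with $\gamma=\varepsilon$, and for $\gamma\neq\varepsilon$ the element $x$ with $x-x^2\in I$ can be handled by taking $e=0$ (after checking that $x\in I$ in that case).

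The heart of the argument is the case $x\in R_\varepsilon$. Here $x-x^2\in I$, and intersecting with the $\varepsilon$-component, I would pass to $\ol I = I\cap R_\varepsilon$, which is a right ideal of the unital ring $R_\varepsilon$. Then $x-x^2\in\ol I$, and since $R_\varepsilon$ is exchange, the property Lift (the non-graded characterization from \cite[Proposition 1.1]{Nicholson_clean} recorded in section \ref{subsection_exchange}) tells us that idempotents lift modulo any right ideal of $R_\varepsilon$; in particular there is $e=e^2\in R_\varepsilon$ with $e-x\in\ol I\subseteq I$. This $e$ lies in $R_\varepsilon$ by construction and satisfies $e-x\in I$, which is exactly what the graded lifting requires. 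The point is that the idempotent-lifting property of exchange rings, applied internally to $R_\varepsilon$, produces an idempotent automatically homogeneous of degree $\varepsilon$, so no separate homogeneity check for $e$ is needed.

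For the remaining case $x\in R_\gamma$ with $\gamma\neq\varepsilon$, I would show directly that $x\in I$ and take $e=0$. Since $x\in R_\gamma$ and $x^2\in R_{\gamma^2}$ with $\gamma\neq\gamma^2$ (as $\gamma\neq\varepsilon$), the homogeneous decomposition of the element $x-x^2\in I$ forces each homogeneous component to lie in $I$ (because $I$ is a graded right ideal, hence is the direct sum of its homogeneous components $I\cap R_\delta$). The degree-$\gamma$ component of $x-x^2$ is $x$ itself, so $x\in I\cap R_\gamma\subseteq I$; then $e=0=e^2\in R_\varepsilon$ gives $e-x=-x\in I$ as desired.

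\textbf{Main obstacle.} The step I expect to require the most care is making the degree bookkeeping in the case $\gamma\neq\varepsilon$ fully rigorous, in particular invoking that a graded right ideal decomposes as the direct sum of its homogeneous pieces $I=\bigoplus_\delta (I\cap R_\delta)$, so that membership of a homogeneous element in $I$ is equivalent to membership of its single nonzero homogeneous component. Once that graded-ideal decomposition is in hand, isolating the degree-$\gamma$ part of $x-x^2$ is routine. The genuinely substantive input is entirely concentrated in the $\varepsilon$-case, where the non-graded exchange-implies-idempotent-lifting fact for $R_\varepsilon$ does all the work.
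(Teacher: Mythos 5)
Your proposal is correct and follows essentially the same route as the paper's proof: split on whether the degree $\gamma$ of $x$ is $\varepsilon$ or not, use the graded decomposition of $I$ to get $x\in I$ and take $e=0$ when $\gamma\neq\varepsilon$, and when $\gamma=\varepsilon$ apply the Lift property of the exchange ring $R_\varepsilon$ to the right ideal $I\cap R_\varepsilon$. The only cosmetic difference is that the paper phrases the $\varepsilon$-case via $e-x\in(x-x^2)R_\varepsilon\subseteq I_\varepsilon$ rather than invoking the general "idempotents lift modulo every right ideal" formulation, but these are the same fact.
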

\begin{proof}
Let $I$ be a graded right ideal, $x\in R_\gamma$ and $x-x^2\in I.$ Consider the cases $\gamma=\varepsilon$ and $\gamma\neq \varepsilon.$ In the first case, $x-x^2$ is in $I_\varepsilon$ which is a right ideal of $R_\varepsilon.$ Since $R_\varepsilon$ is exchange, Lift holds so there is $e=e^2\in R_\varepsilon$ such that $e-x\in (x-x^2)R_\varepsilon\subseteq I_\varepsilon\subseteq I.$ If $\gamma\neq\varepsilon,$ then $x\in I_\gamma\subseteq I$ (and $x^2\in I_{\gamma^2}$) since $I$ is graded. So, one can take $e=0$ and have $e-x=-x\in I.$ 
\end{proof}

If $R$ is graded right exchange, then $R_\varepsilon$ is exchange, so the conclusion of Proposition \ref{lifting_idempotents} holds. This conclusion is not sufficient for $R$ to be graded exchange since the right ideal $(x-x^2)R$ is not graded if $x\in R_\gamma$ and $\gamma\neq\varepsilon$. In particular, Example \ref{example_gr_reg_not_gr_exch} exhibits a $\Zset$-graded ring $R$ such that $R_0$ is exchange (so the conclusion of Proposition \ref{lifting_idempotents} holds) and $R$ is not graded exchange.

If $R$ is a graded unital ring and $A$ is a graded right $R$-module, let FinExch$_{\gr}$($A$) stand for the graded version of FinExch($A$). The proof of \cite[Theorem 2.1]{Nicholson_clean} 
directly translates to the proof of the equivalence 
\[\mbox{FinExch}_{\gr}(A_R)\mbox { holds }\;\Leftrightarrow\; \mbox{ Exch holds for }\END_R(A)_\varepsilon.\label{*}\tag{*}
\]
Thus, FinExch$_{\gr}(R_R)$ holds if and only if $R$ has Exch$_\varepsilon$. If Morita equivalence is suitably defined for graded unital rings (as in \cite[Section 2.3]{Roozbeh_book}), the following holds.  

\begin{proposition}
The condition {\em Exch}$_\varepsilon$ is graded Morita invariant for graded unital rings. 
\label{epsilon_exchange}
\end{proposition}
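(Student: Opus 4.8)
The plan is to reduce the graded Morita invariance of Exch$_\varepsilon$ to the non-graded Morita invariance of Exch, using the module-theoretic characterization in equation (*). Recall that two graded unital rings $R$ and $S$ are graded Morita equivalent precisely when $S$ is graded isomorphic to $\End_R(P)$ for some graded progenerator $P$ (a finitely generated graded projective right $R$-module that is also a generator), or equivalently when the categories of graded right modules are equivalent via a graded functor. The key observation I want to exploit is that, by (*), the condition Exch$_\varepsilon$ for $R$ is equivalent to FinExch$_{\gr}(R_R)$ holding, and more generally Exch holding for $\END_R(A)_\varepsilon$ is equivalent to FinExch$_{\gr}(A_R)$. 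So the whole statement becomes a statement about the preservation of the graded finite exchange property of modules under a graded category equivalence.

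First I would fix a graded Morita equivalence between $R$ and $S$, given by a graded equivalence $F$ from graded right $R$-modules to graded right $S$-modules, with $S \cong_{\gr} \END_R(P)$ for a graded progenerator $P_R$. Since $P$ is finitely generated graded projective, $\Hom_R(P,-) = \HOM_R(P,-)$ and $\End_R(P) = \END_R(P)$ is a graded unital ring, so $S$ is honestly graded and $S_\varepsilon = \End_R(P)_\varepsilon = \END_R(P)_\varepsilon$. Then by (*) applied to $A = P$, the ring $S_\varepsilon = \END_R(P)_\varepsilon$ is exchange if and only if FinExch$_{\gr}(P_R)$ holds. So I must show: if $R$ has Exch$_\varepsilon$, i.e. FinExch$_{\gr}(R_R)$ holds, then FinExch$_{\gr}(P_R)$ holds for every graded progenerator $P$ (and by symmetry the converse, starting from $S$).

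The main step is to verify that the graded finite exchange property FinExch$_{\gr}$ is inherited by graded finitely generated projective modules from $R_R$, and is invariant under graded isomorphism and graded suspension. Concretely: a graded progenerator $P_R$ is a graded direct summand of a finite direct sum of suspensions $(\gamma_1)R \oplus \cdots \oplus (\gamma_n)R$. The ordinary finite exchange property passes to finite direct sums and to direct summands (by \cite{Crawley_Jonsson} and \cite{Nicholson_clean}, as recalled in the excerpt), and each suspension $(\gamma)R$ has FinExch$_{\gr}$ whenever $R_R$ does, since $\END_R((\gamma)R)_\varepsilon \cong R_\varepsilon$ (this is exactly the computation already used in the excerpt, where $\M_1(R)(\gamma) \cong_{\gr} (\gamma^{-1})R(\gamma)$ has $\varepsilon$-component $R_\varepsilon$). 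These closure properties for FinExch$_{\gr}$ are the graded analogues of the Crawley--Jónsson and Nicholson results, and they follow from (*) together with the fact that Exch is closed under corners and finite matrix rings: if $A$ is a graded summand of $B$ with FinExch$_{\gr}(B)$, then $\END_R(A)_\varepsilon$ is a corner of $\END_R(B)_\varepsilon$ and hence exchange. I would assemble these into: FinExch$_{\gr}$ holds for every graded finitely generated projective $P$ if it holds for $R_R$.

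The hard part will be confirming that the equivalence (*) and the corner/matrix closure of Exch interact correctly through the graded progenerator, i.e. that passing from $R_R$ to the summand $P$ of a finite sum of suspensions really does exhibit $\END_R(P)_\varepsilon$ as a corner of a matrix ring over $R_\varepsilon$-type data rather than over some larger mismatched $\varepsilon$-component. The subtlety is that $\END_R((\gamma_1)R\oplus\cdots\oplus(\gamma_n)R)_\varepsilon$ is the $\varepsilon$-component of the graded matrix ring $\M_n(R)(\gamma_1^{-1},\ldots,\gamma_n^{-1})$, whose diagonal blocks are copies of $R_\varepsilon$ but whose off-diagonal $\varepsilon$-entries lie in various shifted components $R_{\gamma_i\gamma_j^{-1}}$. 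I would check that this $\varepsilon$-component is nonetheless a ring for which Exch holds whenever $R_\varepsilon$ is exchange — most cleanly by invoking (*) directly for $A = (\gamma_1)R\oplus\cdots\oplus(\gamma_n)R$ and the graded-module closure of FinExch$_{\gr}$ under finite graded direct sums, rather than trying to analyze the matrix ring by hand. Once this closure under graded finite sums and graded summands is in place, the two directions of the equivalence follow symmetrically, giving graded Morita invariance of Exch$_\varepsilon$.
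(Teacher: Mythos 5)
Your proposal is correct and follows essentially the same route as the paper: both arguments reduce Exch$_\varepsilon$ to FinExch$_{\gr}$ of graded modules via the equivalence (\ref{*}), then use the graded analogue of the Crawley--J\'{o}nsson closure under finite direct sums, invariance under suspension, and closure of Exch under corners to pass from $R_R$ to a graded progenerator. The paper merely packages these same ingredients as ``Exch$_\varepsilon$ is closed under graded matrix algebras and graded corners,'' and it handles the off-diagonal-shift subtlety you flag exactly as you propose, by arguing module-theoretically through (\ref{*}) rather than analyzing the graded matrix ring directly.
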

\begin{proof}
If $A$ and $B$ are graded right $R$-modules, FinExch$_{\gr}(A\oplus B)$ holds if and only if FinExch$_{\gr}(A)$ and FinExch$_{\gr}(B)$ hold (the proof of \cite[Lemma 3.10]{Crawley_Jonsson} adapts to the graded case). We also claim that FinExch$_{\gr}(A)$ holds if and only if FinExch$_{\gr}((\gamma)A)$ holds for any graded right module $A$ and any $\gamma\in \Gamma.$ Indeed, if $(\gamma)A$ is a direct summand of some module $B=B_1\oplus B_2,$ and  FinExch$_{\gr}(A)$ holds, then $A$ is a direct summand of $(\gamma^{-1})B=(\gamma^{-1})B_1\oplus (\gamma^{-1})B_2.$ Thus, there are  graded submodules $A_i$ of $(\gamma^{-1})B_i$ such that $A_1\oplus A_2$ is a complement of $A$ in $(\gamma^{-1})B.$ The modules $(\gamma)A_i$ are  graded submodules of $B_i$ such that $(\gamma)A_1\oplus (\gamma)A_2$ is a complement of $(\gamma)A$ in $B.$ Because of the equivalence (\ref{*}), this shows that the property Exch$_\varepsilon$ is closed under the formation of graded matrix algebras. 

If $e$ is a homogeneous idempotent in $R$, then $(eRe)_\varepsilon=eR_\varepsilon e.$ Thus, the fact that Exch is closed under the formation of corners directly implies that Exch$_\varepsilon$ is closed under the formation of graded corners. Hence, Exch$_\varepsilon$ is graded Morita invariant.
\end{proof}

\section{Graded cancellation properties of Leavitt path algebras}\label{section_LPAs}

\subsection{Leavitt path algebras review}
\label{subsection_LPAs} 

Let $E$ be a directed graph, let $E^0$ denote the set of vertices, $E^1$ the set of edges, and  $\so$ and $\ra$ denote the source and the range maps of $E.$  If both $E^0$ and $E^1$ are finite, $E$ is a {\em finite} graph. A {\em sink} of $E$ is a vertex which does not emit edges. A vertex of $E$ is {\em regular} if it is not a sink and if it emits finitely many edges and $E$ is said to be {\em row-finite} if every vertex is either a sink or it is regular. 
A {\em path} is a sequence of edges $e_1e_2\ldots e_n$ such that  $\ra(e_i)=\so(e_{i+1})$ for $i=1,\ldots, n-1$ or a single vertex. A {\em cycle} is a closed path such that different edges in the path have different sources. The graph is {\em acyclic} if there are no cycles. A cycle has {\em an exit} if a vertex on the cycle emits an edge outside of the cycle. The graph is {\em no-exit} if no cycle has an exit and it has {\em Condition (K)} if no vertex is on exactly one cycle. 

Extend a graph $E$ to the graph with the same vertices and with edges $E^1\cup \{e^*\mid e\in E^1\}$ where the range and the source maps on the added edges are given by $\so(e^*)=\ra(e)$  and $\ra(e^*)=\so(e).$ If $K$ is any field, the \emph{Leavitt path algebra} $L_K(E)$ of $E$ over $K$ is a free $K$-algebra generated by the set  $E^0\cup E^1\cup\{e^\ast\mid e\in E^1\}$ such that for all vertices $v,w$ and edges $e,f,$

\begin{tabular}{ll}
(V)  $vw =0$ if $v\neq w$ and $vv=v,$ & (E1)  $\so(e)e=e\ra(e)=e,$\\
(E2) $\ra(e)e^\ast=e^\ast\so(e)=e^\ast,$ & (CK1) $e^\ast f=0$ if $e\neq f$ and $e^\ast e=\ra(e),$\\
(CK2) $v=\sum_{e\in \so^{-1}(v)} ee^\ast$ for each regular vertex $v.$ &\\
\end{tabular}

By the first four axioms, every element of $L_K(E)$ can be represented as a sum of the form $\sum_{i=1}^n k_ip_iq_i^\ast$ for some $n$, paths $p_i$ and $q_i$, and elements $k_i\in K,$ for $i=1,\ldots,n$ where $v^*=v$ for $v\in E^0$ and $p^*=e_n^*\ldots e_1^*$ for a path $p=e_1\ldots e_n.$ Using this representation, one can make $L_K(E)$ into an involutive ring by $\left(\sum_{i=1}^n k_ip_iq_i^\ast\right)^*=\sum_{i=1}^n k_i^*q_ip_i^\ast$ where $k_i\mapsto k_i^*$ is any involution on $K$. It is direct to see that $L_K(E)$ is locally unital (with the finite sums of vertices as the local units) and that $L_K(E)$ is unital if and only if $E^0$ is finite in which case the sum of all vertices is the identity. For more details on these basic properties, see \cite{LPA_book}.

We note a short lemma we need later on. 
\begin{lemma}
If $p=e_1\ldots e_n$ is a path, then $pp^*=\so(p)$ if and only if $\so(e_i)$ emits only $e_i$ for every $i=1,\ldots, n.$ 
\label{positive_definite}
\end{lemma}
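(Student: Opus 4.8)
The plan is to prove both directions by induction on the length $n$ of the path, using the defining axioms (E1), (E2), (CK1), and (CK2) of $L_K(E)$ together with the relation $e^*e=\ra(e)$. The key computational fact is that for a single edge $e$ with $\so(e)=v$, the axiom (CK2) reads $v=\sum_{f\in\so^{-1}(v)} ff^*$ whenever $v$ is regular, so $ee^*=v$ forces the sum over $\so^{-1}(v)$ to collapse to the single term $ee^*$; conversely, if $v$ emits only $e$ (and $v$ is regular, being the source of an edge and hence not a sink), then (CK2) gives exactly $ee^*=v=\so(e)$. I expect to establish this base case $n=1$ first, as it isolates the genuine content of the lemma.

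First I would treat the direction ($\Leftarrow$). Assume $\so(e_i)$ emits only $e_i$ for each $i$. For $n=1$, apply (CK2) at the regular vertex $\so(e_1)$ as above to get $e_1e_1^*=\so(e_1)$. For the inductive step, write $p=e_1 q$ where $q=e_2\ldots e_n$, so that $pp^*=e_1 qq^* e_1^*$. By the induction hypothesis applied to $q$ (whose internal sources still emit only the relevant edge), $qq^*=\so(q)=\ra(e_1)$, hence $pp^*=e_1\ra(e_1)e_1^*=e_1 e_1^*$ using (E1), and the base case gives $e_1e_1^*=\so(e_1)=\so(p)$.

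Next I would treat ($\Rightarrow$) by contraposition: suppose some $\so(e_j)$ emits an edge other than $e_j$. The natural strategy is to exhibit a homogeneous or length-graded component obstructing the equality $pp^*=\so(p)$. Concretely, I would use the $\Zset$-grading of $L_K(E)$ in which $\so(p)$ has degree $0$ while the expansion of $pp^*$, after applying (CK2) at the first offending vertex, produces a term of the form $e_1\ldots e_{j-1} f f^* e_{j-1}^*\ldots e_1^*$ with $f\neq e_j$, $f\in\so^{-1}(\so(e_j))$. Expanding $\so(p)=\sum ee^*$ repeatedly via (CK2) and comparing coefficients, the presence of a second emitted edge $f$ means $pp^*$ omits the cross terms needed to reconstitute $\so(p)$; equivalently, $p^*p=\ra(p)\neq 0$ always, but $pp^*\neq\so(p)$ because $\so(p)-pp^*$ contains the nonzero idempotent summand coming from $f$. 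The cleanest way to certify nonvanishing is to note that $\so(p)-pp^*$ is a sum of orthogonal idempotents of the form (partial path)$\cdot ff^*\cdot$(partial path)$^*$, each of which is nonzero in $L_K(E)$.

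The main obstacle I anticipate is the ($\Rightarrow$) direction, specifically arguing rigorously that the leftover term $\so(p)-pp^*$ is genuinely nonzero rather than cancelling in the free-algebra-with-relations $L_K(E)$. Establishing nonvanishing of a specific element of a Leavitt path algebra cannot rely on the generating relations alone; I would instead invoke a faithful representation or the standard normal-form/basis result for $L_K(E)$ (elements are $K$-linear combinations $\sum k_i p_i q_i^*$ of reduced monomials, which are linearly independent), so that the term $e_1\ldots e_{j-1}ff^*e_{j-1}^*\ldots e_1^*$ survives as a nonzero basis element distinct from the terms appearing in $pp^*$. With that linear-independence input, the coefficient comparison is routine and the contrapositive closes the proof.
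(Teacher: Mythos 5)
Your reverse direction and the overall inductive structure are fine, and your instinct that the only delicate point is certifying nonvanishing is partly right, but the forward direction as you set it up has a genuine gap: every step of your contrapositive argument rests on applying (CK2) at the offending vertex $\so(e_j)$, and (CK2) holds only at \emph{regular} vertices. The lemma is stated for an arbitrary graph and is invoked in the paper precisely in situations where a source vertex on the path could be an infinite emitter (in Proposition \ref{LPA_graded_clean_characterization} the vertex set is finite but the edge set is not restricted, and in Proposition \ref{graded_exchange_necessary} the graph is completely arbitrary). At an infinite emitter $v$ the identity $v=\sum_{f\in\so^{-1}(v)}ff^*$ is simply false, so your expansion of $\so(p)-pp^*$ as a finite sum of idempotents $e_1\cdots e_{j-1}ff^*e_{j-1}^*\cdots e_1^*$ is unavailable and the argument does not get off the ground. (A smaller imprecision: being the source of an edge makes a vertex a non-sink, not regular; in your $\Leftarrow$ direction this is harmless because the hypothesis forces the vertex to emit exactly one edge.)

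The paper's proof avoids (CK2) entirely in the forward direction. For the base case it observes that if $ee^*=\so(e)$ and $f\neq e$ is another edge out of $\so(e)$, then $ee^*$ and $ff^*$ are orthogonal projections by (CK1), so $ee^*\leq ee^*+ff^*\leq\so(e)=ee^*$ in the order on projections; hence $ff^*=0$ and $f=ff^*f=0$, contradicting that $f$ is a nonzero basis element. This works whether or not $\so(e)$ is regular, and the only ``nonvanishing'' input it needs is that a single edge is nonzero --- much lighter than the linear-independence statement for a family of monomials that you were planning to invoke. (Incidentally, even in the row-finite case your nonvanishing step can be reduced to the observation that your leftover terms are pairwise orthogonal nonzero idempotents, so a vanishing sum would force each to vanish; the full normal-form theorem is not needed. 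But that repair does not address the infinite-emitter problem.) If you want to keep your contrapositive structure, replace the (CK2) expansion by the projection-order comparison: with $r=e_1\cdots e_{j-1}$ and $f\in\so^{-1}(\so(e_j))$, $f\neq e_j$, compute $f^*r^*(\so(p)-pp^*)rf=\ra(f)\neq 0$ directly from (CK1), which again uses no (CK2).
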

\begin{proof}
We use induction on $n.$ If $n=1,$ $p=e\in E^1,$ and $ee^*=\so(e),$ assume that $\so(e)$ emits more than one edge so that  $S=\{f\in \so^{-1}(\so(e))\mid f\neq e\}\neq \emptyset.$ 
If $\leq$ stands for the order on the set of projections (selfadjoint idempotents) given by $p\leq q$ iff $pq=qp=p,$ then $ee^*\leq ee^*+ff^*$ and $ee^*=\so(e)\geq ee^*+ff^*$ for any $f\in S.$ Thus, $ee^*+ff^*=ee^*$ which implies that $ff^*=0$ and so $f=f\ra(f)=ff^*f=0f=0$. This is a contradiction since $f$ is a basis element of $L_K(E).$ 

Assuming the induction hypothesis, let $p=eq$ for $e\in E^1$ and a path $q$ with $|q|\geq 1.$ If $pp^*=\so(p),$ multiply this relation by $e^*$ on the left and by $e$ on the right to have that $qq^*=\ra(e)=\so(q).$ The induction hypothesis applies to $q,$ so it remains to show that $\so(e)$ emits no other edges but $e.$ This holds also by the induction hypothesis since $ee^*=e\so(q)e^*=eqq^*e^*=pp^*=\so(p)=\so(e).$

The converse is direct by the (CK2) and (E1) axioms. 
\end{proof}

If we consider $K$ to be trivially graded by $\Zset,$ $L_K(E)$ is naturally graded by $\Zset$ so that the $n$-component $L_K(E)_n$ is the $K$-linear span of the elements $pq^\ast$ for paths $p, q$ with $|p|-|q|=n$ where $|p|$ denotes the length of a path $p.$ While one can grade a Leavitt path algebra by any group $\Gamma$ (see \cite[Section 1.6.1]{Roozbeh_book}), we always consider the natural grading by $\Zset.$ 

\subsection{Cancellation and graded cancellation properties of Leavitt path algebras}

By \cite[Theorems 1 and 2]{Gene_Ranga_regular}, the conditions that $L_K(E)$ is locally unit-regular, that $L_K(E)$ is regular, and that $E$ is acyclic are equivalent. 
By Proposition \ref{local_P_implies_P}, local unit-regularity implies unit-regularity so if $E$ is acyclic, $L_K(E)$ is unit-regular. The converse holds since unit-regularity implies regularity which, for a Leavitt path algebra, implies that the underlying graph is acyclic. 

By \cite[Theorem 4.12]{Lia_traces}, $L_K(E)$ is locally directly finite if and only if $E$ is no-exit. This is equivalent with $L_K(E)$ being directly finite since DF $\Rightarrow$ local DF as DF transfers to corners and since local DF $\Rightarrow$ DF by Proposition \ref{local_P_implies_P}.

We claim that $L_K(E)$ has sr=1 if and only if $E$ is acyclic. If $E$ is acyclic, then $L_K(E)$ is a direct limit of matricial algebras over $K.$ Such direct limit has local sr=1 and so $L_K(E)$ has sr=1 by Proposition \ref{local_P_implies_P}. Conversely, if $L_K(E)$ has sr=1, then it has DF by Proposition \ref{DF_and_corners}, so $E$ is no-exit. Also, since sr=1 is closed under formation of corners by \cite[Theorem 3.9]{Vaserstein2}, every corner of $L_K(E)$ has sr=1. Assuming that there is a cycle in $E$, let $v$ be any vertex of this cycle. By \cite[Lemma 2.2.7]{LPA_book}, $vL_K(E)v$ is isomorphic to $K[x,x^{-1}]$ and this algebra does not have sr=1. Thus, $E$ is acyclic.

By \cite[Theorem 4.5]{Gonzalo_Pardo_Molina_exchange}, $L_K(E)$ is exchange if and only if $E$ has Condition (K). Thus, if $L_K(E)$ is clean, $E$ has Condition (K). If $E$ is acyclic, $L_K(E)$ has Reg + sr=1, so Cln as well. It is not known exactly which class of graphs sandwiched between the acyclic graphs and graphs with Condition (K) corresponds to clean Leavitt path algebras. The diagram below summarizes the relations. 

{\small
\begin{center}
\begin{tabular}{ccc}
\begin{tabular}{|c|} \hline 
UR, sr=1,\\ Reg\\ \hline
\end{tabular}
& $\Rightarrow$ & 
\begin{tabular}{|c|} \hline 
DF  \\ \hline
\end{tabular}\\
$\Updownarrow$&&$\Updownarrow$\\
\begin{tabular}{|c|} \hline 
$E$ is acyclic.\\ \hline
\end{tabular}
& $\Rightarrow$ & 
\begin{tabular}{|c|} \hline 
$E$ is no-exit. \\ \hline
\end{tabular}\\
\end{tabular}\hskip1cm
\begin{tabular}{ccccc}
\begin{tabular}{|c|} \hline 
UR, sr=1,\\ Reg\\ \hline
\end{tabular}
& $\Rightarrow$ & 
\begin{tabular}{|c|} \hline 
Cln  \\ \hline
\end{tabular}
& $\Rightarrow$& 
\begin{tabular}{|c|} \hline 
Exch\\ \hline
\end{tabular}\\
$\Updownarrow$&&$\Updownarrow$&&$\Updownarrow$\\
\begin{tabular}{|c|} \hline 
$E$ is acyclic.\\  \hline
\end{tabular}
& $\Rightarrow$ & 
\begin{tabular}{|c|} \hline 
{\em To be}\\{\em determined.}\\ \hline
\end{tabular}
& $\Rightarrow$& 
\begin{tabular}{|c|} \hline 
$E$ has\\ Condition (K).\\ \hline
\end{tabular}\\
\end{tabular}
\end{center}}

We turn to the graded cancellation properties of Leavitt path algebras now. The 0-component $L_K(E)_0$ is an ultramatricial algebra over $K$ so it is locally unit-regular. Thus, the conditions UR$_\varepsilon,$ sr=1$_\varepsilon,$ DF$_\varepsilon,$ Cln$_\varepsilon,$ and Exch$_\varepsilon$ hold for $L_K(E)$ of any graph $E.$ 

By \cite[Theorem 3.7]{Roozbeh_Ranga_Ashish}, $L_K(E)$ is graded locally directly finite if and only if $E$ is no-exit. If $E^0$ is finite, these conditions are equivalent with $L_K(E)$ being graded directly finite. We consider the graded versions of other cancellation properties for Leavitt path algebras next.

\subsection{Graded clean Leavitt path algebras}
For Leavitt path algebras, the condition Cln$_{\gr}$ is the most restrictive of all $\gr$-cancellation properties.  

\begin{proposition}
If $K$ is a field and $E$ is a graph with finitely many vertices, the following conditions are equivalent. 
\begin{enumerate}[\upshape(1)]
\item $L_K(E)$ is graded clean. 
\item $E$ is either a collection of disjoint vertices with no edges between them or $E$ is $\xymatrix{\bullet\ar@(ru,rd) }\;\;\;\;\,$.
\end{enumerate} 
\label{LPA_graded_clean_characterization} 
\end{proposition}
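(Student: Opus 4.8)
The plan is to use Lemma \ref{graded_clean_lemma} as the central tool, reducing the graded cleanness of $L_K(E)$ to a statement about its graded structure that can be read off directly from the graph $E$. By that lemma, $L_K(E)$ is graded clean if and only if $L_K(E)_0$ is clean and every nonzero homogeneous element in a nonzero-degree component is invertible. Since $E^0$ is finite, $L_K(E)$ is unital (with identity $\sum_{v\in E^0} v$), so the lemma applies. The strategy is to show that the invertibility-of-all-nonzero-homogeneous-elements condition is extremely rigid and forces $E$ into one of the two listed shapes.

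For the direction (2) $\Rightarrow$ (1), I would handle the two cases separately. If $E$ is a disjoint collection of vertices with no edges, then $L_K(E)\cong_{\gr} K^{|E^0|}$ is trivially graded, so Cln$_{\gr}$ reduces to Cln, and a finite product of fields is clean. If $E$ is the single loop $\xymatrix{\bullet\ar@(ru,rd)}$, then $L_K(E)\cong K[x,x^{-1}]$, which is a graded field and hence graded clean by the remark following Lemma \ref{graded_clean_lemma}. This direction is routine.

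The substantive direction is (1) $\Rightarrow$ (2). Assuming $L_K(E)$ is graded clean, Lemma \ref{graded_clean_lemma} tells me every nonzero homogeneous element of nonzero degree is a unit. First I would show $E$ is acyclic or a single loop: if $E$ contains a cycle $c$ based at $v$ that is not the whole graph or has an exit, I want to produce a nonzero homogeneous non-unit. The natural candidate is an edge $e$ (degree $1$): if $e$ is invertible with homogeneous inverse $y\in L_K(E)_{-1}$, then $ey=1=\sum_w w$ and $ye=1$, and I would extract a contradiction from the local/idempotent structure (e.g. $e=\so(e)e\ra(e)$ forces any inverse relation to live between single vertices, so $1$ would be a single vertex, forcing $|E^0|=1$). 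More generally, the key computation is that $ee^*=\so(e)$ forces $\so(e)$ to emit only $e$ (Lemma \ref{positive_definite}), and invertibility of each edge forces every vertex to emit exactly one edge and, dually via $e^*$, to receive exactly one edge. Combined with $E^0$ finite and connectivity considerations, each connected component with an edge must be a single cycle with no exit; and a cycle of length $\geq 2$ produces a degree-$1$ element (a single edge) that is a non-unit (it is not surjective onto the identity), contradicting the lemma. Hence any component with an edge is a single loop, and if there is such a component there can be no other vertices (else $1$ is a sum of orthogonal idempotents and a single edge $e$ satisfies $e=ve$ for one vertex $v$, so $e$ cannot be invertible in the whole algebra). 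This leaves exactly the two configurations in (2).

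The main obstacle I anticipate is making the ``every nonzero homogeneous element of nonzero degree is invertible'' condition bite precisely enough: I must rule out, simultaneously, multiple vertices coexisting with any edge, cycles of length $\geq 2$, and exits, all by exhibiting concrete homogeneous non-units. The cleanest uniform device is to test invertibility on individual edges $e\in L_K(E)_1$ and on elements of the form $e_1\cdots e_k$, using the relations (E1), (CK1), (CK2) and Lemma \ref{positive_definite} to show that invertibility of such an element forces $\so(e)$ to emit only $e$, $\ra(e)$ to receive only $e$, and the supporting idempotents $\so(e),\ra(e)$ to equal the identity $1$. That last equality is what collapses $E^0$ to a single vertex whenever an edge is present, and it is the step requiring the most care since it hinges on the interplay between the grading, the local units, and the Leavitt relations rather than on any single formula.
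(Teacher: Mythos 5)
Your proposal is correct and follows essentially the same route as the paper: reduce via Lemma \ref{graded_clean_lemma} to invertibility of all nonzero homogeneous elements of nonzero degree, test this on a single edge $e$ (using $ve=0$ for $v\neq\so(e)$) to force $|E^0|=1$ whenever an edge exists, and invoke Lemma \ref{positive_definite} to conclude that the surviving component is a single loop. The only computation left to fill in is $ee^*=\so(e)$ (the paper gets it from $e^*e=1=ue$, which gives $e^*=u$), and your detour through connected components and cycles of length $\geq 2$ becomes superfluous once $|E^0|=1$ is established.
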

\begin{proof}
If $L_K(E)$ is graded clean and $E$ has edges, they are invertible by Lemma \ref{graded_clean_lemma}. Assuming that $E$ has more than one vertex and some edges, let $u$ be the inverse of  an edge $e$ and let $v$ be a vertex different than $\so(e).$ Then $ve=v\so(e)e=0$ and so $0=veu=v\neq 0$ which is a contradiction. Thus, $\so(e)=1$ is the only vertex in $E$. Since $e^*e=\so(e)=1=ue,$ $e^*=u.$ This implies that   $ee^*=1=\so(e)$ and Lemma \ref{positive_definite} ensures that $e$ is the only edge $\so(e)$ emits.

Conversely, if $E$ is a collection of disjoint vertices, then $L_K(E)$ is graded isomorphic to a sum of finitely many copies of $K,$ graded trivially, and this ring is graded clean. If $E$ is  $\xymatrix{\bullet\ar@(ru,rd) }\;\;\;\;\,$, then $L_K(E)$ is graded isomorphic to $K[x,x^{-1}]$ which is a graded field and, hence, graded clean. 
\end{proof}

\subsection{Graded unit-regular Leavitt path algebras} 
We move on to the condition UR$_{\gr}.$ Let (EDL) be the graph property below.  
\begin{itemize}
\item[(EDL)] For every cycle of length $m,$ the lengths, considered modulo $m$, of all paths which do not contain the cycle and which end in an arbitrary but fixed vertex of the cycle, are
\[0,0,\ldots, 0,\; 1, 1, \ldots, 1,\; \ldots\ldots \ldots,\; m-1, m-1\ldots m-1 \]
where each $i$ is repeated the same number of times in the above list for $i=0, \ldots, m-1.$
\end{itemize}
The notation EDL shortens ``equally distributed lengths''. 
By \cite[Theorem 5.3]{Lia_cancellation}, if $E$ is a finite graph, $L_K(E)$ has UR$_{\gr}$ if and only if $E$ is a no-exit graph such that every sink is isolated and that Condition (EDL) holds. We relax the assumption for this result by requiring only $E^0$ to be finite. 

\begin{lemma}
If $L_K(E)$ is unital and graded unit-regular, then $E$ is row-finite.  
\label{lemma_for_UR_gr}
\end{lemma}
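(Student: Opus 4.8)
The plan is to argue by contradiction. Suppose $E$ is not row-finite, so some vertex $v$ emits infinitely many edges. Because $v$ emits more than one edge, Lemma~\ref{positive_definite} gives $ee^*\neq \so(e)=v$ for every edge $e$ with $\so(e)=v$; in the $*$-algebra $L_K(E)$ each $ee^*$ is then a projection strictly below $v$. Since $E^0$ is finite, the pigeonhole principle produces a single vertex $w$ and infinitely many edges $e_1,e_2,\ldots$ with $\so(e_i)=v$ and $\ra(e_i)=w$. The (CK1) relations give $e_i^*e_j=\delta_{ij}w$, so the elements $p_i=e_ie_i^*$ form an infinite family of mutually orthogonal nonzero projections with $p_i\le v$ and with no finite subsum equal to $v$ (the absence of a (CK2) relation at the infinite emitter $v$ is exactly what Lemma~\ref{positive_definite} detects). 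This infinite orthogonal family under $v$ is the feature I will play against the finiteness forced by graded unit-regularity.

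Next I would pass to a graded corner to cut the ambient ring down to the relevant vertices. Set $h=v$ if $w=v$ and $h=v+w$ otherwise; then $h$ is a homogeneous idempotent, and the graded corner $C=hL_K(E)h$ is again graded unit-regular, since graded unit-regularity is inherited by graded corners. The edge $e_1$ lies in $C$ and is homogeneous of degree $1$, so graded unit-regularity of $C$ supplies a homogeneous unit $u$ of $C$ with $e_1ue_1=e_1$; comparing degrees forces $\deg u=-1$. Multiplying $e_1ue_1=e_1$ on the left by $e_1^*$ and using $e_1^*e_1=w$ yields the corner identity $wue_1=w$.

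The case $w=v$ now closes immediately and illustrates the mechanism: here $1_C=v=w$, so $wue_1=w$ reads $ue_1=1_C$, exhibiting the unit $u$ as a left inverse of $e_1$ in $C$. Being a unit, $u$ then has $e_1=u^{-1}$ invertible in $C$, whence $e_1^*=e_1^{-1}$ and $e_1e_1^*=1_C=v$, contradicting Lemma~\ref{positive_definite}. The genuine difficulty is the case $w\neq v$, which I expect to be the main obstacle. There $v$ and $w$ are complementary idempotents of $C$ and $u$ is merely a unit of $C$, so $wue_1=w$ no longer collapses to a left-inverse statement. The key point to exploit is that $u$ and $u^{-1}$ are single elements of $L_K(E)$, hence finite $K$-linear combinations of monomials $pq^*$, so only finitely many of the edges $e_i$ occur in them. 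I would combine this finiteness with $uu^{-1}=u^{-1}u=1_C$ and $e_1ue_1=e_1$, together with the orthogonality relations $e_i^*e_j=\delta_{ij}w$ and $p_ip_j=\delta_{ij}p_i$, to show that the would-be (CK2) sum $\sum_i p_i$ at $v$ is forced to terminate, i.e. that $e_1e_1^*=v$; this again contradicts Lemma~\ref{positive_definite}. Converting ``the witnessing unit is finitely supported'' into ``the infinite orthogonal family $\{p_i\}$ cannot exist'' is precisely the step that carries the weight.

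Assembling these, the contradiction shows that $E$ has no infinite emitters. Since $E^0$ is finite, every vertex emits only finitely many edges, so each vertex is either a sink or a regular vertex; that is, $E$ is row-finite.
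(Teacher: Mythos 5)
Your proof is not complete: the entire weight of the argument rests on the case $w\neq v$, and there you only describe a plan (``I would combine this finiteness with \dots to show that \dots'') rather than give an argument. The case $w=v$ that you do close is a very special situation (a loop at $v$), and the corner-reduction does not make the remaining case any easier; you have correctly isolated the key idea --- that a homogeneous unit and its inverse are finite $K$-linear combinations of monomials $pq^*$ and so can only ``see'' finitely many of the edges leaving $v$ --- but converting that observation into a contradiction is exactly the content of the lemma, and it is missing. A secondary issue: you invoke without justification that graded unit-regularity passes to graded corners; this is plausible but is not stated anywhere in the paper and would itself need a proof, and in any case the detour through $hL_K(E)h$ (and the pigeonhole selection of a common range $w$) is unnecessary.

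For comparison, the paper's proof runs the finite-support idea directly in $L_K(E)$ and is very short. Take any $e\in\so^{-1}(v)$; graded unit-regularity gives a homogeneous unit $u$ with $e=eue$, and comparing degrees forces $u\in R_{-1}$, hence $u^{-1}\in R_1$. Write $u^{-1}=\sum_i k_ip_iq_i^*$ with $|p_i|=|q_i|+1\geq 1$, so each $p_i$ begins with an edge. Since $v$ emits infinitely many edges and only finitely many monomials occur, there is $f\in\so^{-1}(v)$ which is not the initial edge of any $p_i$; then $f^*u^{-1}=0$ by (CK1). Multiplying $u^{-1}u=1$ on the left by $f^*$ yields $f^*=f^*u^{-1}u=0$, contradicting the fact that $f^*$ is a nonzero basis element. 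Note that this uses only the invertibility of $u$ (not the relation $e=eue$ beyond fixing the degree of $u$), which is why no corner, no common range vertex, and no appeal to Lemma~\ref{positive_definite} are needed. If you want to salvage your outline, the step you must supply is precisely this: exhibit a single homogeneous element ($f^*$ above) that annihilates $u^{-1}$ on one side yet is recovered from the identity $u^{-1}u=1$.
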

\begin{proof}
Assume that $v\in E^0$ emits infinitely many edges. If $e\in \so^{-1}(v),$ let $u$ be an invertible element in $R_{-1}$ such that $e=eue.$ Since $u^{-1}\in R_1,$ $u^{-1}$ can be written as a $K$-linear combination of monomials $pq^*$ where path $p$ is one edge longer than path $q.$ Let $f\in \so^{-1}(v)$ be an edge different than any edge appearing in any paths $p,q$ of such representation of $u^{-1}$ so that $f^*u^{-1}=0$ by (CK1). Multiplying the relation $u^{-1}u=1$ by $f^*$ on the left we obtain that $0=f^*$ which is a contradiction. 
\end{proof}

\begin{proposition}
If $L_K(E)$ is unital, the following conditions are equivalent. 
\begin{enumerate}[\upshape(1)]
\item $L_K(E)$ is graded unit-regular. 
 
\item $L_K(E)$ has graded stable range one.  

\item $E$ is finite, no-exit graph such that every sink is isolated and Condition (EDL) holds. 
\end{enumerate}
\label{graded_UR_characterization}
\end{proposition}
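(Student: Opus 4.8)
The plan is to establish the cycle of implications $(1)\Rightarrow(2)\Rightarrow(3)\Rightarrow(1)$, where the bulk of the work lies in the forward direction, since $(3)\Rightarrow(1)$ is essentially supplied by the already-known finite-graph result \cite[Theorem 5.3]{Lia_cancellation}. For $(1)\Rightarrow(2)$ I would invoke the general implication UR$_{\gr}\Rightarrow$ sr=1$_{\gr}$ recorded earlier (from \cite{Lia_cancellation}), which holds for any graded unital ring and so applies verbatim here. The implication $(2)\Rightarrow(3)$ is where the new content sits: I must show that graded stable range one forces $E$ to be finite (not merely to have finitely many vertices), no-exit, with isolated sinks and Condition (EDL).

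The first key step is the finiteness of $E^1$. Since sr=1$_{\gr}\Rightarrow$ DF$_{\gr}$ (again by \cite{Lia_cancellation}) and DF$_{\gr}$ in turn gives UR$_\varepsilon$-style regularity on the $0$-component, one route is to first extract graded unit-regularity of $L_K(E)$ from $(2)$; but strictly under $(2)$ I only have graded stable range one. The cleaner path is to show directly that row-finiteness follows, mirroring Lemma \ref{lemma_for_UR_gr}: I would take a vertex $v$ emitting infinitely many edges, pick $e\in\so^{-1}(v)$, and use the graded stable range one condition applied to the homogeneous pair coming from $e$ to produce a homogeneous invertible witness in degree $1$; expanding that inverse as a finite $K$-linear combination of monomials $pq^*$ and choosing an edge $f\in\so^{-1}(v)$ avoiding all edges occurring in those finitely many monomials yields $f^*(\text{inverse})=0$ by (CK1), and left-multiplying the inverse relation by $f^*$ forces $f^*=0$, a contradiction. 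Since $E^0$ is already assumed finite, row-finiteness (Lemma \ref{lemma_for_UR_gr} combined with the argument just sketched) upgrades to $E$ being a finite graph.

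With $E$ now finite, I would feed this back into the established equivalence for finite graphs: by \cite[Theorem 5.3]{Lia_cancellation}, graded unit-regularity of $L_K(E)$ is equivalent to condition $(3)$. To close the loop I need graded unit-regularity, not just sr=1$_{\gr}$, so at this stage I would argue that for a \emph{finite} graph the two coincide. The no-exit property follows because sr=1$_{\gr}\Rightarrow$ DF$_{\gr}$, and DF$_{\gr}$ for $L_K(E)$ is equivalent to $E$ being no-exit by \cite[Theorem 3.7]{Roozbeh_Ranga_Ashish} together with the finiteness of $E^0$; that a sink must be isolated, and that (EDL) holds, I would obtain by localizing at each cycle and at each sink, using that sr=1 (and hence the graded versions) transfers to graded corners so that the corner $vL_K(E)v$ at a cycle vertex, graded isomorphic to a matrix ring over $K[x^m,x^{-m}]$, must itself satisfy sr=1$_{\gr}$, forcing the length distribution to be equally balanced exactly as in (EDL).

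The main obstacle I anticipate is the passage from the weaker-looking hypothesis sr=1$_{\gr}$ in $(2)$ back to the full strength of UR$_{\gr}$ needed to apply \cite[Theorem 5.3]{Lia_cancellation}; a priori sr=1$_{\gr}$ is implied by, but need not imply, UR$_{\gr}$. The resolution is that once $E$ is shown finite, \cite[Theorem 5.3]{Lia_cancellation} characterizes UR$_{\gr}$ by the same condition $(3)$ that the sr=1$_{\gr}$ argument independently produces, so both graded properties collapse to $(3)$ and hence to each other. Thus the finiteness reduction is the crux: it is what lets me deploy the finite-graph theorem and thereby avoid proving a genuinely new structural characterization from scratch.
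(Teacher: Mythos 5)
Your cycle $(1)\Rightarrow(2)\Rightarrow(3)\Rightarrow(1)$ puts all the weight on $(2)\Rightarrow(3)$, and that is exactly where the argument does not go through as sketched. The missing idea is that every Leavitt path algebra is graded von Neumann regular (\cite[Theorem 9]{Roozbeh_regular}), and for graded regular rings graded stable range one is \emph{equivalent} to graded unit-regularity (\cite[Proposition 4.2]{Lia_cancellation}, the graded analogue of Proposition \ref{UR_vs_sr_one}). This gives $(1)\Leftrightarrow(2)$ in one line, after which the paper only needs $(1)\Rightarrow E$ row-finite (Lemma \ref{lemma_for_UR_gr}, which uses the unit-regularity of an edge directly), hence $E$ finite, and then \cite[Theorem 5.3]{Lia_cancellation} closes $(1)\Leftrightarrow(3)$. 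You explicitly flag the passage from sr=1$_{\gr}$ back to UR$_{\gr}$ as the ``main obstacle'' and propose to circumvent it by deriving $(3)$ independently from $(2)$; but that circumvention is precisely what you cannot carry out with the tools you list.

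Concretely, two steps in your $(2)\Rightarrow(3)$ sketch are unsupported. First, to mimic Lemma \ref{lemma_for_UR_gr} you need a homogeneous invertible element of degree $-1$ witnessing $e=eue$ for an edge $e$; graded stable range one does not hand you such a witness --- its conclusion is only that some $x+yz$ generates $R$ as a right module, and extracting a two-sided homogeneous unit of the right degree attached to a specific edge requires the regularity input you are trying to avoid. Second, your route to (EDL) ``by localizing at each cycle'' is incorrect: the corner $vL_K(E)v$ at a cycle vertex is not a graded matrix ring over $K[x^m,x^{-m}]$ (it is $K[x,x^{-1}]$ up to isomorphism in the ungraded sense by \cite[Lemma 2.2.7]{LPA_book}), and (EDL) is a global condition on \emph{all} paths ending at the cycle, which no single corner sees; establishing it from scratch would amount to reproving \cite[Theorem 5.3]{Lia_cancellation}. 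Once you know $(1)\Leftrightarrow(2)$ via graded regularity, none of this is needed: the finiteness lemma plus the cited finite-graph theorem finish the proof.
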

\begin{proof}
Since every Leavitt path algebra is graded regular (by \cite[Theorem 9]{Roozbeh_regular}), the first two conditions are equivalent (see \cite[Proposition 4.2]{Lia_cancellation}). 
If (1) holds, then $E$ is a finite graph by Lemma \ref{lemma_for_UR_gr}. Thus, \cite[Theorem 5.3]{Lia_cancellation} ensures that (1) $\Leftrightarrow$ (3).  
\end{proof}

Although UR$_{\gr}$ and Cln$_{\gr}$ are independent in general, Cln$_{\gr}\Rightarrow$ UR$_{\gr}$ for  unital Leavitt path algebras by Propositions \ref{LPA_graded_clean_characterization} and \ref{graded_UR_characterization}. 

\subsection{Graded exchange Leavitt path algebras} Note that a Leavitt path is a graded involutive ring, so the conditions Exch$_{\gr}^r$ and Exch$_{\gr}^l$ are equivalent. In the next result, we formulate a necessary condition for a Leavitt path algebra to be graded exchange. 

\begin{proposition}
For an arbitrary graph $E$, if $L_K(E)$ is graded exchange, then $E$ is no-exit.   
\label{graded_exchange_necessary} 
\end{proposition}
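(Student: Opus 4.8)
The plan is to prove the contrapositive: if $E$ is not no-exit, then $L_K(E)$ is not graded exchange. So I would start from a cycle $c=e_1\cdots e_m$ of length $m\geq 1$, based at the vertex $v=\so(e_1)=\ra(e_m)$, which has an exit. Since graded exchange entails graded right exchange and Exch$^r_{\gr}$ is closed under the formation of graded corners by Proposition~\ref{exchange_graded_corners_and_dir_lim}, and since $v$ is a homogeneous idempotent, it suffices to show that the graded corner $R:=vL_K(E)v$, which is graded unital with identity $v$, is not graded right exchange. The element $c$ lies in $R_m$ and satisfies $c^*c=v$ (telescoping via the (CK1) and (E2) axioms), while $cc^*\neq v$ by Lemma~\ref{positive_definite}, because the exit forces some $\so(e_i)$ to emit an edge other than $e_i$. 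From $c^*c=v$ one sees that $c$ has no right inverse in $R$: a relation $cd=v$ with $d\in R$ would give $d=vd=c^*cd=c^*v=c^*$ and hence $cc^*=v$, a contradiction. Thus $v\notin cR$, and this noninvertibility is exactly the obstruction I will exploit.

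Next I would test the graded right exchange condition on the homogeneous element $x=c$. Any homogeneous idempotent is forced into $R_0$ (if $e=e^2\in R_d$ then $e=e^2\in R_{2d}$, whence $d=0$), so the condition supplies an idempotent $e\in R_0$ with $e\in cR$ and $v-e\in(v-c)R$. Writing $v-e=(v-c)s=s-cs$ for some $s\in R$ (using $vs=s$) and comparing homogeneous components, the crux is a top-degree argument. If $D$ is the highest degree with $s_D\neq 0$, then the degree-$(D+m)$ component of $s-cs$ equals $-c\,s_D$ and must vanish; applying $c^*$ on the left and invoking $c^*c=v$ yields $s_D=0$ unless $D+m=0$. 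Hence the top degree of $s$ is exactly $-m$, so $s_0=0$, and the degree-$0$ part of the equation, namely $s_0-c\,s_{-m}=v-e$, collapses to $v-e=-c\,s_{-m}\in cR$.

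Combining $e\in cR$ with $v-e\in cR$ gives $v=e+(v-e)\in cR$, contradicting $v\notin cR$. This contradiction shows that $R$, and therefore $L_K(E)$, is not graded right exchange, which completes the contrapositive and hence the proof. The step I expect to be the main obstacle is the bookkeeping in the homogeneous-component comparison: one must rule out every degree except the single value $-m$ and then verify that the surviving degree-$0$ equation genuinely places $v-e$ inside $cR$; the repeated cancellation of $c$ on the left via the isometry relation $c^*c=v$ is precisely what makes these eliminations work. A secondary point requiring care is the reduction to the corner inside the possibly nonunital algebra $L_K(E)$, which is handled by Proposition~\ref{exchange_graded_corners_and_dir_lim} together with the observation that $v$ is a homogeneous idempotent, so that $R=vL_K(E)v$ is a graded corner with identity $v$.
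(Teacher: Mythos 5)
Your proof is correct, and while it shares the paper's overall strategy (test the graded exchange condition on the cycle $c$, compare homogeneous components, and invoke Lemma \ref{positive_definite}), the execution is genuinely different in its endgame. You pass to the unital graded corner $vL_K(E)v$ (legitimate by Proposition \ref{exchange_graded_corners_and_dir_lim} together with the unital case of Proposition \ref{graded_exchange}), isolate the obstruction up front --- $c^*c=v$ but $cc^*\neq v$, hence $v\notin cR$ --- and then need only a single top-degree comparison: the relation $c^*cs_D=s_D$ kills every candidate top degree except $-m$, so $v-e=-cs_{-m}\in cR$, which together with $e\in cR$ forces the contradiction $v\in cR$. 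The paper instead stays in the possibly nonunital $L_K(E)$ and works with $e\in cR$ and $e\in c\circ R$, grinding through \emph{all} components of $s$ (negative degrees vanish by a lowest-degree argument, degrees not divisible by $m$ vanish, $s_{km}=c^{k-1}(cu-c)$), then uses finiteness of the support of $s$ to conclude $cu=c$ and combines this with $u=cr$ to get $cc^*=c^*c=v$, contradicting the exit via Lemma \ref{positive_definite} directly. Your route is shorter because the isometry relation $c^*c=v$ does all the cancellation in one step and the contradiction is the crisp statement that a non-unitary isometry is not right invertible; the paper's version avoids the corner reduction and exhibits the full structure of $s$, which is more explicit but longer. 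The only detail you should make explicit is the degenerate case $s=0$: there $v-e=0$, so $e=v\in cR$ gives the same contradiction immediately, and your conclusion $v-e\in cR$ holds in all cases.
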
 
\begin{proof}
Let $R=L_K(E)$ be graded exchange. Assume that there is a cycle $c$ with an edge exiting $c.$ If $v$ is the source of that exit, consider $c$ so that $v=\so(c)=\ra(c).$

Let $|c|=m>0.$ Since $c\in R_m$, there is $u=u^2\in R_0$ such that $u\in cR$ and $u\in c\circ R.$ The first relation implies that $u=cr$ for some $r\in R.$ Since $c=cv,$ we can replace $r$ by $vr$ so that $r=vr.$ 

As $u\in c\circ R,$  $u=c+s-cs$ for some $s\in R.$
Let $s_n$ denote the $n$-component of $s.$ If there is $k>0$ such that $s_{-k}\neq 0,$ then we can take $k$ to be the largest such so that $s_{-l}=0$ for all $l>k.$ By considering the $(-k)$-component of the relation $u=c+s-cs,$ we have that $0=s_{-k}-cs_{-m-k}=s_{-k}$ which is a contradiction with the choice of $k$. Hence, $s_{-k}=0$ for all $k>0.$ 

Considering the 0-component of $u=c+s-cs,$ we have that $u=s_0.$ Considering the $i$-component of the same relation for $i=1,\ldots, m-1$ implies that $s_i=0$ and this ensures that $s_i=0$ for each $i$ which is not a multiple of $m$. 

Considering the $m$-component of $u=c+s-cs,$ we have that $0=c+s_m-cs_0 \Rightarrow s_m=cs_0-c=cu-c.$ Considering the $km$-component for $k=2, 3,\ldots,$ we obtain that $s_{km}=c^{k-1}(cu-c).$ Since only finitely many components of $s$ are nonzero, there is a positive integer $k$ such that $c^{k-1}(cu-c)=0.$ Multiplying this relation by $(c^*)^{k-1}$, we obtain that $cu-c=0$ and so $c=cu.$ 

Combining the relations $u=cr$ and $c=cu,$ we have that $c=c^2r.$ Multiplying by $c^*$ on the left produces $c^*c=c^*c^2r \Rightarrow$ $v=cr=u.$ On the other hand, multiplying $c=c^2r$ with $(c^*)^2$ on the left produces $c^*=vr=r$ so that $u=cr=cc^*.$ Thus, $cc^*=u=v=c^*c.$ By Lemma \ref{positive_definite}, $c$ has no exits which is a contradiction with the choice of $c.$
\end{proof}

Using Proposition \ref{graded_exchange_necessary}, we can obtain examples showing Reg$_{\gr}$ $\nRightarrow$ Exch$_{\gr}^r$ and Exch $\nRightarrow$ Exch$_{\gr}^r.$
\begin{example}
Let $E$ be the graph $\;\;\;\;\xymatrix{\ar@(lu,ld)\bullet\ar@(ru,rd) }\;\;\;\;.$ Since $E$ has a cycle with an exit, $L_K(E)$ is not graded exchange by Proposition \ref{graded_exchange_necessary}. On the other hand, $E$ does have Condition (K) so $L_K(E)$ is exchange by \cite[Theorem 4.5]{Gonzalo_Pardo_Molina_exchange} and $L_K(E)$ is graded regular by \cite[Theorem 9]{Roozbeh_regular} (stating that every Leavitt path algebra is graded regular). 
\label{example_gr_reg_not_gr_exch}
\end{example}

Together with the previous results, this shows that the implications and the equivalences of the diagram (\ref{D2}) in the introduction hold and that each horizontal implication in that diagram is strict. 

If $E$ is a no-exit graph, $L_K(E)$ is graded isomorphic to a direct limit of algebras which are a finite direct sum of $\M_n(K)(\gamma_1, \ldots, \gamma_n)$ and $\M_n(K[x^m, x^{-m}])(\gamma_1,\ldots, \gamma_n)$ for various $n$ and $m$ by \cite[Proposition 3.3]{Lia_no-exit}. The algebras of the first type are graded exchange by Proposition \ref{matrices_graded_exchange}. Using the definition of $k_i$ from Proposition \ref{matrices_graded_exchange}, the algebras of the second type are graded exchange if $k_i<2,$ but at this point, we make no claim on what happens if $k_i\geq 2.$ We also note that $k_i>0.$ Indeed, recall that $n$ corresponds to the number of paths which end in an arbitrarily selected vertex of a cycle of length $m$ but without considering the cycle itself. The shifts $\gamma_1, \ldots, \gamma_n$ correspond to the lengths of these paths. There is at least one set of paths of lengths $0,1,\ldots m-1$  which correspond to the paths of the cycle ending at the selected vertex. Thus, $k_i>0.$ 

The above arguments also show that the converse of Proposition \ref{graded_exchange_necessary} would hold if the question whether $\M_n(K[x^m, x^{-m}])(\gamma_1, \ldots, \gamma_n)$ is graded exchange for every $k_i>0$ had an affirmative answer. With the question still unanswered currently,
we only have one direction both in Proposition \ref{graded_exchange_necessary} and in the next proposition, containing a sufficient condition for $L_K(E)$ to be graded exchange. 

\begin{proposition}
If $E$ is a disjoint union of acyclic graphs and graphs consisting of a single cycle each, then $L_K(E)$ is graded exchange. 
\label{graded_exchange_sufficient} 
\end{proposition}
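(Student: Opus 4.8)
The plan is to decompose $L_K(E)$ according to the connected components of $E$, treat each type of component separately, and then reassemble. Since $E$ is a disjoint union of its components, one has $L_K(E)=\bigoplus_{C}L_K(C)$ as graded rings, where $C$ ranges over the connected components, each of which is either acyclic or a single cycle. I would first record that a (possibly infinite) direct sum of graded right exchange general rings is again graded right exchange: a homogeneous $x=(x_C)\in R_\gamma$ has finite support, so for each $C$ one chooses an idempotent $e_C=e_C^2\in L_K(C)_\varepsilon$ with $e_C\in x_CL_K(C)$ and $e_C\in x_C\circ L_K(C)$ via the nonunital formulation of Proposition \ref{graded_exchange}, sets $e_C=0$ off the support of $x$, and assembles $e=(e_C)$; then $e$ is a homogeneous idempotent with $e\in xL_K(E)$ and $e\in x\circ L_K(E)$. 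Thus it suffices to show that $L_K(C)$ is graded right exchange for each component $C$.

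For an acyclic component $C$, the algebra $L_K(C)$ is a graded direct limit of finite direct sums of graded matrix algebras $\M_n(K)(\gamma_1,\ldots,\gamma_n)$ over the trivially graded field $K$ (this is the acyclic case of \cite[Proposition 3.3]{Lia_no-exit}, where no Laurent-polynomial blocks occur because there are no cycles). Each such matrix algebra is graded exchange by Proposition \ref{matrices_graded_exchange}(1); a finite direct sum of graded right exchange rings is graded right exchange by the componentwise argument above; and graded right exchange is preserved under graded direct limits by Proposition \ref{exchange_graded_corners_and_dir_lim}. Hence $L_K(C)$ is graded right exchange.

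For a single-cycle component $C$ of length $m$, I would identify $L_K(C)\cong_{\gr}\M_m(K[x^m,x^{-m}])(0,1,\ldots,m-1)$: the paths ending at a fixed vertex of the cycle and not containing the cycle are precisely the $m$ initial segments of the cycle, of lengths $0,1,\ldots,m-1$, so $n=m$ and each residue class modulo $m$ is realized exactly once. In the notation of Proposition \ref{matrices_graded_exchange}(2) this says $k_i=1$ for every $i$, so $L_K(C)$ is graded exchange by that proposition. Combining the two cases with the direct-sum reduction shows that $L_K(E)$ is graded right exchange; since $L_K(E)$ is graded involutive, Exch$_{\gr}^r$ and Exch$_{\gr}^l$ coincide, so $L_K(E)$ is graded exchange.

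The main obstacle is not deep but lies in making the structural identifications precise: confirming that the single-cycle components land in the favorable $k_i<2$ case of Proposition \ref{matrices_graded_exchange}(2) — which is exactly what the ``single cycle'' hypothesis guarantees, since any additional paths feeding into a cycle would force some $k_i\geq 2$ — and checking the bookkeeping that the nonunital graded exchange property passes to arbitrary direct sums and to the graded direct limits supplied by \cite[Proposition 3.3]{Lia_no-exit}.
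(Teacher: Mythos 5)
Your proposal is correct and follows essentially the same route as the paper: decompose $L_K(E)$ structurally (acyclic parts as graded direct limits of finite sums of $\M_n(K)(\gamma_1,\ldots,\gamma_n)$, single cycles as $\M_m(K[x^m,x^{-m}])(0,1,\ldots,m-1)$ with each $k_i=1$) and then invoke Propositions \ref{matrices_graded_exchange} and \ref{exchange_graded_corners_and_dir_lim}. The only difference is that you spell out the componentwise direct-sum bookkeeping that the paper leaves implicit, which is a harmless (and correct) elaboration.
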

\begin{proof}
If $E$ is as specified, then $L_K(E)$ is graded isomorphic to a direct sum of algebras of two types: first, direct limits of finite sums of algebras of the form $\M_n(K)(\gamma_1, \ldots, \gamma_n)$ and, second, algebras $\M_m(K[x^m, x^{-m}])(0, 1,\ldots, m-1)$ for various positive integers $m.$ By Propositions \ref{exchange_graded_corners_and_dir_lim} and \ref{matrices_graded_exchange}, $L_K(E)$ is graded exchange.
\end{proof}

By Propositions \ref{graded_exchange_necessary} and \ref{graded_exchange_sufficient}, the class of graphs characterizing graded exchange Leavitt path algebras strictly contains the acyclic graphs and it is contained in the class of no-exit graphs.

\end{document}